\theoremstyle{plain}
\newtheorem{theorem}{Theorem}
\newtheorem{thm}{Theorem}[section]
\newtheorem{lemma}[thm]{Lemma}
\newtheorem{cor}[thm]{Corollary}
\newtheorem{prop}[thm]{Proposition}
\theoremstyle{definition}
\newtheorem{question}[]{Question} 
\newtheorem{rem}[thm]{Remark}
\newtheorem{ex}[thm]{Example}
\setlist[enumerate]{label=\rm{(\roman*)}}
\numberwithin{equation}{section}
\renewcommand{\leq}{\leqslant}
\renewcommand{\geq}{\geqslant}
\newcommand{\<}{\langle}
\renewcommand{\>}{\rangle}
\renewcommand{\epsilon}{\varepsilon}
\renewcommand{\emptyset}{\varnothing}
\DeclareMathOperator{\frat}{Frat}
\DeclareMathOperator{\diam}{diam}
\begin{document}

\author{Scott Harper}
\address{S. Harper, School of Mathematics, University of Bristol, Bristol BS8 1UG, UK, and Heilbronn Institute for Mathematical Research, Bristol, UK.}
\email{scott.harper@bristol.ac.uk}

\author{Andrea Lucchini}
\address{A. Lucchini, Dipartimento di Matematica ``Tullio Levi-Civita'', Universit\`a degli Studi di Padova, 35121 Padova, Italy}
\email{lucchini@math.unipd.it}
\thanks{The first author is grateful for the support of the London Mathematical Society through an Early Career Fellowship and for the hospitality of Universit\`a degli Studi di Padova. Both authors thank an anonymous referee for their helpful comments.}

\title[Generating graph of nilpotent groups]{Connectivity of generating graphs \\ of nilpotent groups}

\date{\today}

\begin{abstract}
Let $G$ be $2$-generated group. The generating graph of $\Gamma(G)$ is the graph whose vertices are the elements of $G$ and where two vertices $g$ and $h$ are adjacent if $G=\<g,h\>$. This graph encodes the combinatorial structure of the distribution of generating pairs across $G$. In this paper we study several natural graph theoretic properties related to the connectedness of $\Gamma(G)$ in the case where $G$ is a finite nilpotent group. For example, we prove that if $G$ is nilpotent, then the graph obtained from $\Gamma(G)$ by removing its isolated vertices is maximally connected and, if $|G| \geq 3$, also Hamiltonian. We pose several questions.
\end{abstract}

\maketitle

\section{Introduction} \label{s:intro}

Since the earliest days of group theory, generating sets for groups have led to many interesting, and often surprising, results. In recent years, the generating graph has provided a combinatorial framework for studying group generation and many new structural results have emerged. 

Let $G$ be a finite group. The \emph{generating graph} of $G$ is the graph $\Gamma(G)$ whose vertices are the elements of $G$ and where $g,h \in G$ are adjacent if $G=\<g,h\>$ (we do not include loops when $G$ is cyclic). Several strong structural results about $\Gamma(G)$ are known in the case where $G$ is simple, and this reflects the rich group theoretic structure of these groups. For example, if $G$ is a nonabelian simple group, then the only isolated vertex of $\Gamma(G)$ is the identity \cite{gk} and the graph $\Delta(G)$ obtained by removing the isolated vertex is connected with diameter two \cite{bgk} and, if $|G|$ is sufficiently large, admits a Hamiltonian cycle \cite{bglm} (it is conjectured that the condition on $|G|$ can be removed). Moreover, there has been much recent interest in attempting to classify the groups $G$ for which $\Gamma(G)$ shares the strong properties of the generating graphs of simple groups; all proper quotients of such groups are necessarily cyclic, so these groups are closely related to simple groups (see \cite[Conjecture~1.8]{bgk}, \cite[Conjecture~1.6]{bglm} and \cite{bg,h} for recent work in this direction). 

In this paper, we focus on groups at the other end of the spectrum: we establish structural results about the generating graphs of nilpotent groups. We emphasise that even for this class of groups the theory is intricate and this leads to several natural questions that we present.

Write $\Delta(G)$ for the graph obtained by removing the isolated vertices from $\Gamma(G)$. If $G$ is a $2$-generated soluble group, then the main theorem of \cite{cl1} states that $\Delta(G)$ is connected. Our first two theorems give significantly stronger versions of this result for nilpotent groups. 

The \emph{(vertex) connectivity} $\kappa(\Gamma)$ is the least number of vertices of $\Gamma$ that can be removed such that induced subgraph on the remaining vertices is disconnected (we say $\kappa(K_n)=n-1$). Since $\kappa(\Gamma)$ is at most the minimal vertex degree $\delta(\Gamma)$, we say $\Gamma$ is \emph{maximally connected} if $\kappa(\Gamma) = \delta(\Gamma)$.

\begin{theorem} \label{thm:conn}
Let $G$ be a finite $2$-generated nilpotent group. Then $\Delta(G)$ is maximally connected.
\end{theorem}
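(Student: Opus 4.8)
The plan is to pass from $G$ to its Frattini quotient $\bar G = G/\frat(G)$, analyse the generating graph there, and transport the conclusion back. For nilpotent $G$ one has $\<g,h\> = G$ if and only if $\<\bar g,\bar h\>=\bar G$, since $\frat(G)$ consists of non-generators; hence adjacency in $\Gamma(G)$ depends only on images in $\bar G$, and $\Gamma(G)$ is the blow-up of $\Gamma(\bar G)$ obtained by replacing each vertex $\bar g$ by its coset $g\,\frat(G)$. When $G$ is non-cyclic these cosets are independent sets (two elements with equal image generate only a proper subgroup), so $\Delta(G)=\Delta(\bar G)[\overline{K_m}]$ is the lexicographic product of $\Delta(\bar G)$ with the empty graph on $m=|\frat(G)|$ vertices. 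The first step is a purely graph-theoretic lemma: for a connected graph $\Lambda$ on at least two vertices and $m\ge1$ one has $\delta(\Lambda[\overline{K_m}])=m\,\delta(\Lambda)$ and $\kappa(\Lambda[\overline{K_m}])=m\,\kappa(\Lambda)$, so the blow-up is maximally connected precisely when $\Lambda$ is. The bound $\kappa\le m\kappa(\Lambda)$ comes from blowing up a minimum cut of $\Lambda$; the matching lower bound follows because a vertex set of size $<m\kappa(\Lambda)$ kills fewer than $\kappa(\Lambda)$ whole fibres, leaving a connected quotient through which all surviving copies can be joined (distinct copies of a surviving vertex are linked through any surviving neighbour). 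This reduces everything to $\bar G$; the cyclic case, where fibres are not independent, is treated separately and is elementary.

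Next I would identify $\bar G\cong\prod_p V_p$ with $V_p\cong(\mathbb Z/p)^{d_p}$ and $d_p\in\{1,2\}$, using that $G$ is $2$-generated nilpotent. Because the factors have coprime orders, every subgroup of the product is the product of its projections, so $\<\bar g,\bar h\>=\bar G$ if and only if the two elements generate each $V_p$; thus adjacency factors coordinatewise. For a single prime this is transparent: $\Gamma(\mathbb Z/p)=K_p$, while for $(\mathbb Z/p)^2$ the graph $\Delta$ is the complete multipartite graph whose parts are the $p+1$ one-dimensional subspaces with $0$ removed, and complete multipartite graphs are maximally connected. A direct count then gives $\delta(\Delta(\bar G))=\prod_{d_p=1}(p-1)\prod_{d_p=2}p(p-1)$, the minimum being attained at those vertices whose rank-one coordinates all vanish (replacing $p$ by $p-1$ in the relevant factors).

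The heart of the argument, and the step I expect to be the main obstacle, is establishing $\kappa(\Delta(\bar G))\ge\delta(\Delta(\bar G))$ when several primes occur. Coordinatewise adjacency means $\Delta(\bar G)$ is essentially a tensor (categorical) product of the single-prime graphs, and tensor products are notoriously delicate for connectivity: a separating set need not respect the product, and two bipartite factors would already disconnect it. The two features that rescue the situation are that a rank-one factor contributes a \emph{loop} at each of its generators (an element of $\mathbb Z/p$ generates together with itself), which prevents any bipartite-type splitting, and that the rank-two factors are complete multipartite with large, heavily overlapping neighbourhoods. I would prove the bound through Menger's theorem, exhibiting for each pair of non-adjacent vertices $\bar g,\bar h$ a family of $\delta(\Delta(\bar G))$ internally disjoint paths: mostly short paths routed through vertices agreeing with $\bar g$ or with $\bar h$ in all but one coordinate, repairing coordinates one prime at a time and using completeness within each factor to keep the paths disjoint.

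Since $\kappa\le\delta$ holds for every graph (and $\kappa=\delta$ trivially when $\Delta(\bar G)$ is complete), the reverse inequality above yields $\kappa(\Delta(\bar G))=\delta(\Delta(\bar G))$; connectivity of $\Delta(\bar G)$ itself is already guaranteed by \cite{cl1}, so the work is genuinely in upgrading connectivity to \emph{maximal} connectivity. Combining this with the blow-up lemma of the first step gives $\kappa(\Delta(G))=m\,\kappa(\Delta(\bar G))=m\,\delta(\Delta(\bar G))=\delta(\Delta(G))$, which is the assertion of the theorem. The delicate points to watch are the degenerate low-order cases (small $p$, or $G$ with trivial or cyclic Frattini quotient) and the bookkeeping in the path construction, but the conceptual difficulty is concentrated entirely in controlling the connectivity of the coordinatewise (tensor) product.
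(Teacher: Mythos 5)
Your reduction steps are sound and in fact coincide with the paper's: the Frattini blow\-up argument is the paper's Lemma~\ref{lem:degree_frat} and Lemma~\ref{lem:conn_frat} (with the same caveat you should note explicitly: when $\Delta(\bar G)$ is complete, i.e.\ $\bar G \cong C_p$ or $C_2^2$, there is no minimum cut to blow up and your fibre-counting lower bound must be rerun separately, which is exactly why the paper splits these cases off via Lemma~\ref{lem:complete}), and the identification $\bar G \cong C_n \times C_{q_1}^2 \times \cdots \times C_{q_s}^2$ with coordinatewise adjacency is \eqref{eq:frat} and \eqref{eq:gen_frat}. The genuine gap is precisely where you locate it: your third paragraph never proves $\kappa(\Delta(\bar G)) \geq \delta(\Delta(\bar G))$, it only announces a Menger-type plan, and the plan as sketched does not work in the form described. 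In a tensor-type product every edge changes every coordinate, so there is no routing through ``vertices agreeing with $\bar g$ in all but one coordinate'' and no repairing ``one prime at a time''; moreover, if $\bar g$ and $\bar h$ generate distinct cyclic subgroups in some rank-two coordinate $j$, that coordinate contributes only $(q_j-1)^2$ choices of common neighbour against the factor $q_j(q_j-1)$ in $\delta$, so length-two paths cannot supply $\delta$ internally disjoint paths and one is forced to thread longer paths around an adversarial cut that need not respect the product structure. Dismissing this as ``bookkeeping'' leaves the heart of the theorem unproved.

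The paper resolves exactly this step not by a bare-hands Menger construction but by quoting the theorem of Wang and Yan \cite{ww} on the connectivity of Kronecker products with complete multipartite graphs, stated in \eqref{mulpi}: since $\Delta(C_p^2) = K_{p-1,\dots,p-1}$ with $p+1 \geq 3$ parts, one inducts on the number $s$ of noncyclic Sylow subgroups, peeling off one factor at a time via $\Delta(G) = \Delta(H) \times \Delta(C_{q_s}^2)$ (Corollary~\ref{cor:product}), with the base cases $C_p^2$ and $C_n \times C_p^2$ (and the cyclic case) proved directly as Lemmas~\ref{lem:conn_cyclic}--\ref{lem:conn_cyclic_ptimesp}. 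It is worth observing that the Wang--Yan formula returns a genuine minimum, $\min\bigl(\kappa(\Gamma)(p^2-1),\ \delta(\Gamma)(p^2-p)\bigr)$, of two structurally different near-extremal cuts, which is further evidence that any direct disjoint-path construction must distinguish at least two cut types and is delicate; the inductive hypothesis $\kappa(\Delta(H))=\delta(\Delta(H))$ is what forces the minimum to land on $\delta(\Delta(G))$. To complete your proposal you would need either to carry out your path construction in full (handling both cut types) or to substitute the citation of \cite{ww} and organise the argument as this induction on $s$.
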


It is an open question whether $\Delta(G)$ is connected for every finite $2$-generated group, and we ask whether the following even stronger property holds.

\begin{question}\label{q:conn}
Is $\Delta(G)$ maximally connected for every $2$-generated finite group $G$?
\end{question}

We now present our second main theorem.

\begin{theorem} \label{thm:cycles}
Let $G$ be a nontrivial finite $2$-generated nilpotent group. Then
\begin{enumerate}
\item $\Delta(G)$ is Eulerian if and only if $G$ is not a cyclic group of even order
\item $\Delta(G)$ is Hamiltonian if and only if $G$ is not the cyclic group of order two.
\end{enumerate}
\end{theorem}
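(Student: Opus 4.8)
The plan is to prove Theorem~\ref{thm:cycles} by reducing both statements to a structural understanding of the nilpotent group $G$ and the degree sequence of $\Delta(G)$. Since $G$ is nilpotent, it is the direct product of its Sylow subgroups, $G = P_1 \times \cdots \times P_r$. A key observation is that a pair $(g,h)$ generates $G$ if and only if its projection to each Sylow subgroup generates that $P_i$, and since $G$ is $2$-generated, each $P_i$ is itself a $2$-generated $p$-group. Moreover, by the Frattini argument, $G = \<g,h\>$ if and only if the images of $g,h$ generate the Frattini quotient $G/\frat(G)$, which is a direct product of elementary abelian groups. This should give a clean formula, or at least good control, over $\deg(v)$ for each non-isolated vertex $v$, and in particular let me identify when all vertices have even degree (for the Eulerian part) and control the minimum degree (for the Hamiltonian part via the connectivity already established in Theorem~\ref{thm:conn}).

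For part (i), I would use the standard characterisation that a connected graph is Eulerian if and only if every vertex has even degree (noting that $\Delta(G)$ is connected by Theorem~\ref{thm:conn}). The task is therefore to compute the parity of $\deg(v)$ for each non-isolated $v \in G$. I expect that the degree of $v$ in $\Gamma(G)$ counts the elements $h$ with $G = \<v,h\>$, and that this count can be expressed via a product or inclusion–exclusion over maximal subgroups containing $v$. The isolated case and the even-order cyclic case will be the exceptional cases: for a cyclic group of even order, I anticipate that the involution (or the generators paired against elements in index-two subgroups) forces an odd degree somewhere, while in all other nilpotent groups a pairing/involution argument on $G$ should show every degree is even. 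The cleanest route is probably to exhibit, for each non-cyclic $G$ (or odd-order cyclic $G$), a fixed-point-free involution on the neighbourhood of each vertex, for instance $h \mapsto h^{-1}$ or $h \mapsto hz$ for a suitable central element $z$, thereby forcing even degree; the delicate point is ensuring this involution has no fixed points and preserves the generating property.

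For part (ii), the strategy is to invoke a sufficient condition for Hamiltonicity in terms of connectivity and independence number, or more directly Chvátal--Erd\H{o}s, which states that a graph with connectivity $\kappa \ge \alpha$ (independence number) is Hamiltonian. Since Theorem~\ref{thm:conn} gives $\kappa(\Delta(G)) = \delta(\Delta(G))$, I would try to bound the independence number $\alpha(\Delta(G))$ above by $\delta(\Delta(G))$. Alternatively, one may use a Dirac-type or Ore-type degree condition once the minimum degree is shown to be at least $|V(\Delta(G))|/2$. Handling the small cases by hand (and verifying the cyclic group of order two is genuinely the only exception, where $\Delta(G)$ is a single edge or has too few vertices) will complete the argument.

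I expect the main obstacle to be the parity computation in part (i): controlling $\deg(v) \pmod 2$ uniformly across all non-isolated vertices of an arbitrary $2$-generated nilpotent group, and pinning down exactly why the even-order cyclic groups fail, will require a careful Möbius/inclusion–exclusion analysis over the lattice of subgroups containing $v$, together with the multiplicativity coming from the Sylow decomposition. The Hamiltonicity in part (ii) should follow more smoothly once the degree and connectivity bounds from Theorem~\ref{thm:conn} are in hand, modulo a finite check of small groups.
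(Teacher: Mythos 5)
Your outline for part (i) is essentially the paper's argument and is sound: reduce to the Frattini quotient, where $G/\frat(G) = C_{p_1}\times\cdots\times C_{p_r}\times C_{q_1}^2\times\cdots\times C_{q_s}^2$, and read off the parity of each degree. The obstacle you flag as the main difficulty is in fact routine, because the paper's Remark~\ref{rem:facts}(iii) gives a closed formula: a vertex $g$ whose Frattini coset has order $\prod_{i\in I}p_i\prod_j q_j$ has degree $\prod_{i\in I}p_i^{a_i}\prod_{i\notin I}(p_i^{a_i}-p_i^{a_i-1})\prod_j(q_j^{b_j}-q_j^{b_j-1})-\epsilon$, so no M\"obius inversion or involution trick is needed: if $G$ is noncyclic then $s\geq 1$ forces an even factor $q_1^{b_1}-q_1^{b_1-1}$; if $G$ is cyclic of odd order the degree is either divisible by some even $p_i^{a_i}-p_i^{a_i-1}$ or equals $|G|-1$, which is even; and for cyclic $G$ of even order the generators have odd degree $|G|-1$. (Beware that your fallback involutions are problematic: $h\mapsto h^{-1}$ has fixed points at involutions, and $h\mapsto hz$ only works for $z\in\frat(G)$, which forces degrees divisible by $|\frat(G)|$ --- not necessarily even.)

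Part (ii) contains a genuine gap: neither Chv\'atal--Erd\H{o}s nor a Dirac/Ore condition holds for general $2$-generated nilpotent groups, and the failures form infinite families, not a finite list of small cases. Concretely, take $G=C_{105}\times C_2^2$, which is Frattini-trivial. Here $\delta(\Delta(G))=420\cdot\frac{2}{3}\cdot\frac{4}{5}\cdot\frac{6}{7}\cdot\frac{1}{2}=96$, so by Theorem~\ref{thm:conn} also $\kappa(\Delta(G))=96$; but for a fixed nontrivial $a\in C_2^2$ the set $\{(x,a): x\in C_{105}\}$ is independent (two such vertices have equal second coordinate, hence never generate) of size $105>96$, so $\alpha(\Delta(G))>\kappa(\Delta(G))$ and Chv\'atal--Erd\H{o}s does not apply. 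Dirac fails as well, since $|V(\Delta(G))|=420\cdot\frac{3}{4}=315$ and $96<315/2$; in general $\delta/|V|=\prod_i(1-1/p_i)\prod_j q_j/(q_j+1)$ drops below $1/2$ as soon as $|G|$ has a few prime divisors (already for cyclic $C_6$, where $\delta=2$ and $\alpha\geq 3$). This is precisely why the paper takes a different route for Hamiltonicity: Dirac is used only for $p$-groups (where $\delta\geq|V|/2$ always holds), and the general case proceeds by induction on the number of Sylow factors via $\Delta(G)=\Delta(H)\times\Delta(P_t)$ (Corollary~\ref{cor:product}) together with Gravier's theorem (Theorem~\ref{thm:acca}), since the tensor product of two Hamiltonian graphs need not be Hamiltonian --- one factor must lie in the class $\mathcal{H}$, which the paper verifies for odd $p$-groups by an explicit cycle construction, treating $C_2\times P$ by hand. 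Without some substitute for this product machinery (or the explicit cycles), your plan does not yield Hamiltonicity for groups such as $C_{105}\times C_2^2$.
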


In \cite[Conjecture~1.6]{bglm}, it is conjectured that if $\Gamma(G)$ has at most one isolated vertex (which is necessarily the identity), then $\Delta(G)$ is Hamiltonian, and the authors of \cite{bglm} proved this conjecture for soluble groups and sufficiently large simple groups. Motivated by Theorem~\ref{thm:cycles} we propose the following question.

\begin{question} \label{q:hamiltonian} 
Is $\Delta(G)$ Hamiltonian for every $2$-generated finite group other than $C_2$?
\end{question}

We now study the total domination number of $\Delta(G)$, which has been the recent focus of attention in the case where $G$ is simple \cite{bh1,bh2}. Recall that the total domination number $\gamma_t(\Gamma)$ of a finite graph $\Gamma$ is the least size of a set $S$ of vertices of $\Gamma$ such that every vertex of $\Gamma$ is adjacent to a vertex in $S$. 

\begin{theorem} \label{thm:tdn}
Let $G$ be a finite $2$-generated nilpotent group. 
\begin{enumerate}
\item We have that $\gamma_t(\Delta(G))=1$ if and only if $G$ is cyclic.
\item Assume that $G$ has exactly $s \geq 1$ noncyclic Sylow subgroups and let $p$ be the smallest prime such that the Sylow $p$-subgroup of $G$ is not cyclic. Then $\gamma_t(\Delta(G)) \geq s+1$ with equality if $p \geq s$.
\end{enumerate}
\end{theorem}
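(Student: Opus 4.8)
My plan is to reduce the whole problem to linear algebra over the Frattini quotients of the Sylow subgroups. Since $G$ is nilpotent, $G = P_1 \times \cdots \times P_k$ with $P_i \in \Syl_{p_i}(G)$, and since $G$ is $2$-generated each $P_i$ is $2$-generated, so $V_i := P_i/\frat(P_i)$ is an $\mathbb{F}_{p_i}$-space of dimension $d_i \in \{1,2\}$, with $d_i = 1$ precisely when $P_i$ is cyclic. Writing $\bar g_i \in V_i$ for the image of the $P_i$-component of $g$, I will use the standard criterion that $\<g,h\> = G$ if and only if $\bar g_i, \bar h_i$ generate $V_i$ for every $i$: for cyclic $P_i$ this says $\bar g_i \neq 0$ or $\bar h_i \neq 0$, and for noncyclic $P_i$ (where $d_i = 2$) it says $\bar g_i, \bar h_i$ are linearly independent. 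In particular $g$ is a vertex of $\Delta(G)$ exactly when $\bar g_i \neq 0$ for every noncyclic $P_i$.

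For part (i), if $G$ is cyclic then any generator $g$ satisfies $\<g,h\> = G$ for every $h$, so $g$ dominates every vertex and also itself, via $\<g\>=G$; hence $\gamma_t(\Delta(G)) = 1$. The converse I would obtain from part (ii): a noncyclic $G$ has $s \geq 1$, so $\gamma_t(\Delta(G)) \geq s+1 \geq 2$.

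The lower bound in part (ii) is a clean diagonal argument, and I would do it first. Relabel so that $P_1, \ldots, P_s$ are the noncyclic Sylow subgroups, and suppose a total dominating set $S = \{g_1, \ldots, g_m\}$ has $m \leq s$. Each $g_j$ lies in $\Delta(G)$, so $\bar g_{j,i} \neq 0$ determines a point of $\mathbb{P}(V_i)$ for every $i \leq s$. Now build a single vertex $h$ by choosing, for each $j \leq m$, the component $\bar h_j$ in the same line of $V_j$ as $\bar g_{j,j}$ (with $\bar h_i \neq 0$ arbitrary for $m < i \leq s$, and $h$ arbitrary on the cyclic parts). Then $h \in \Delta(G)$, but for each $j$ the vectors $\bar g_{j,j}, \bar h_j$ are proportional, hence dependent, so $\<g_j,h\> \neq G$; thus no element of $S$ dominates $h$, a contradiction. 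Hence $|S| \geq s+1$.

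For the upper bound when $p \geq s$, I would construct a total dominating set of size $s+1$. As $p = \min_i p_i$ over noncyclic $i$ and $p \geq s$, each noncyclic $\mathbb{P}(V_i)$ has $p_i + 1 \geq s+1$ points, so I can pick elements $g_0, \ldots, g_s$ whose images in each noncyclic $V_i$ point in $s+1$ pairwise distinct directions and whose images in each cyclic $V_i$ are nonzero. Then every $g_j$ is a vertex of $\Delta(G)$, any two of them generate $G$ (distinct directions give independence on the noncyclic parts, generators on the cyclic parts), so $S$ induces a complete graph and each of its elements is dominated within $S$. For an arbitrary vertex $h$, its direction $\bar h_i$ coincides with $\bar g_{j,i}$ for at most one $j$ in each of the $s$ noncyclic coordinates, so at most $s$ of the $s+1$ elements are blocked and some $g_j$ is independent from $h$ in every noncyclic coordinate; being a generator on every cyclic coordinate, that $g_j$ satisfies $\<g_j,h\> = G$. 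Hence $\gamma_t(\Delta(G)) = s+1$. The hard part is this upper bound: one must make a single $g_j$ work simultaneously across all $s$ noncyclic primes, and the pigeonhole count ``at most $s$ blocked among $s+1$'' is exactly what needs $p \geq s$, guaranteeing enough distinct directions in each $\mathbb{P}(V_i)$.
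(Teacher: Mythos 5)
Your argument is correct and is essentially the paper's own proof: your identification of the noncyclic Sylow components with the projective lines $\mathbb{P}(V_i)$ (each with $q_i+1$ points) is exactly the paper's reduction in Proposition~\ref{prop:tdn} of $\gamma_t(\Delta(G))$ to $\gamma_t(K_{q_1+1} \times \cdots \times K_{q_s+1})$, and your diagonal lower bound together with the pigeonhole upper bound via $s+1$ elements with pairwise distinct directions is precisely Lemma~\ref{lem:tdn_graph}. The only difference is organisational: the paper factors the combinatorics through a standalone lemma on direct products of complete graphs, whereas you inline the same argument in the group.
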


We give more information about the total domination number in Section~\ref{s:tdn}, but the following remains open.

\begin{question} \label{q:tdn}
What is $\gamma_t(\Delta(G))$ for a general finite $2$-generated nilpotent group? 
\end{question}

Finally, we study the clique number $\omega$ and the chromatic number $\chi$. Here, the result follows from work of Mar\'oti and the second author, except in the case where $G$ is cyclic. In the statement of this result, we write $\phi$ for the Euler totient function and we write $\pi(n)$ for the number of distinct prime divisors of $n$.

\begin{theorem} \label{thm:clique_chromatic}
Let $G$ be a finite $2$-generated nilpotent group. Then $\omega(\Gamma(G)) = \chi(\Gamma(G))$. Moreover, 
\begin{enumerate}
\item if $G$ is not cyclic, then $\omega(\Gamma(G)) = \chi(\Gamma(G)) = p+1$ where $p$ is the smallest prime such that the Sylow $p$-subgroup of $G$ is not cyclic
\item if $G$ is cyclic of order $n$, then $\omega(\Gamma(G)) = \chi(\Gamma(G)) = \phi(n) + \pi(n)$.
\end{enumerate}
\end{theorem}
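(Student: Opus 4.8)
The plan is to treat the noncyclic and cyclic cases separately. For noncyclic $G$, I would first reduce to the Frattini quotient: since generation is detected modulo $\Phi(G)$, two elements generate $G$ precisely when their images generate $G/\Phi(G)$, so $\Gamma(G)$ is obtained from $\Gamma(G/\Phi(G))$ by replacing each vertex with an independent set of $|\Phi(G)|$ copies, the fibres being independent because $G/\Phi(G)$ is noncyclic and so is generated by no single element. Such a blow-up by independent sets changes neither the clique number nor the chromatic number, so it suffices to compute $\omega$ and $\chi$ for $G/\Phi(G) \cong \prod_p C_p^{d_p}$ with each $d_p \le 2$; this is exactly the situation handled by the work of Mar\'oti and the second author, which gives $\omega = \chi = p+1$ for the smallest prime $p$ with $d_p = 2$, and I would simply cite that result here.

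The substance is the cyclic case, $G = C_n$ with $n = q_1 \cdots q_r$ a product of the prime powers $q_i = p_i^{a_i}$ and $r = \pi(n)$. Writing $C_n \cong C_{q_1} \times \cdots \times C_{q_r}$ via the Chinese Remainder Theorem, the key observation is that a cyclic $p$-group has its subgroups linearly ordered, so $\<x, y\> = C_{q_i}$ if and only if at least one of $x, y$ generates $C_{q_i}$. Hence, setting $B(g) = \{\, i : g_i \text{ does not generate } C_{q_i}\,\}$ for $g = (g_1, \dots, g_r)$, we have $\<g, h\> = C_n$ if and only if $B(g) \cap B(h) = \emptyset$. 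This reformulates $\Gamma(C_n)$ entirely in terms of disjointness of the sets $B(g) \subseteq \{1, \dots, r\}$: the $\phi(n)$ generators are exactly the elements with $B(g) = \emptyset$, and they are universal vertices.

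For the clique number I would argue that a clique is a set of elements whose $B$-sets are pairwise disjoint. All $\phi(n)$ generators (with $B = \emptyset$) lie in every maximal clique, and any further clique members have nonempty, pairwise disjoint $B$-sets, of which there are at most $r$ inside an $r$-element ground set; taking one element with $B(g) = \{i\}$ for each $i$ (which exists since each $C_{q_i}$ has a non-generator and each other factor has a generator) attains this, giving $\omega(\Gamma(C_n)) = \phi(n) + \pi(n)$. For the chromatic number I would exhibit a proper colouring with exactly this many colours: give the $\phi(n)$ generators distinct colours, and colour each non-generator $g$ by a colour indexed by an arbitrary $i \in B(g)$ drawn from a disjoint palette of size $r$; two non-generators sharing a colour then share an index of their $B$-sets and so are nonadjacent. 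Since $\chi \ge \omega$ always, this yields $\chi(\Gamma(C_n)) = \omega(\Gamma(C_n)) = \phi(n) + \pi(n)$, proving both the equality $\omega = \chi$ and the stated value.

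The main obstacle is less any single computation than getting the combinatorial reformulation exactly right and then confirming the two matching bounds. The delicate points are verifying that the generators really do lie in a maximal clique and are universal (so the large additive term $\phi(n)$ is forced), establishing the matching upper bound $\omega \le \phi(n) + \pi(n)$ via the pigeonhole bound on pairwise disjoint nonempty subsets of $\{1, \dots, \pi(n)\}$, and checking that the explicit colouring is proper across the generator/non-generator boundary. For the noncyclic case, the only real care needed is justifying that the fibres over $G/\Phi(G)$ are independent and that the blow-up preserves both invariants, after which the cited theorem applies verbatim.
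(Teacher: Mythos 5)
Your proposal is correct and takes essentially the same route as the paper: the noncyclic case is disposed of by citing \cite{luma} (the paper does so directly, without your Frattini blow-up reduction, which is a harmless extra step), and in the cyclic case your clique of the $\phi(n)$ generators together with one element whose $B$-set is $\{i\}$ for each prime (the paper takes $g^{p_i}$, of order $n/p_i$) and your colouring with one colour per generator plus one colour per prime indexing a witness to non-generation coincide with the paper's clique and colour classes, merely rephrased via CRT coordinates and disjointness of the sets $B(g)$ instead of divisibility of element orders. Indeed, your formulation is arguably cleaner than the paper's, whose colour classes $C_i = G \setminus \bigcup_{0 \leq j \leq i} B_j$ as literally written are nested rather than the intended ``elements whose order divides $n/p_i$ but not $n/p_j$ for $j < i$''; your rule ``colour a non-generator by some $i \in B(g)$'' is exactly that intended colouring.
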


Theorem~\ref{thm:clique_chromatic} may lead the reader to ask whether the equality $\omega(\Gamma(G))=\chi(\Gamma(G))$ holds for an arbitrary $2$-generated finite group. It does not: by \cite{LM}, there are infinitely many nonabelian finite simple groups $G$ with $\omega(\Gamma(G)) < \chi(\Gamma(G))$. However the following question is open.

\begin{question}\label{q:clique_chromatic}
Does there exist a $2$-generated finite soluble group $G$ with $\omega(\Gamma(G)) < \chi(\Gamma(G))$?
\end{question}

\section{Preliminaries} \label{s:prelims}

Our graph theoretic notation is standard. In particular, for graphs $\Gamma$ and $\Delta$ we write
\begin{enumerate}[ ]
\item $V(\Gamma)$ and $E(\Gamma)$ for the vertex and edge sets of $\Gamma$ \\[-9pt]
\item $\delta_\Gamma(v)$ for the degree of a vertex $v \in V(\Gamma)$ and $\delta(\Gamma)$ for $\min_{v \in V(\Gamma)} \delta_\Gamma(v)$ \\[-9pt]
\item $\overline{\Gamma}$ for the complement of $\Gamma$ \\[-9pt]
\item $K_n$ and $\overline{K}_n$ for the complete and null graphs with $n$ vertices \\[-9pt]
\item $\Gamma \times \Delta$ for the \emph{direct} (or \emph{categorical} or \emph{tensor}) \emph{product}, whose vertex set is $V(\Gamma) \times V(\Delta)$ and where adjacency is defined as $(\gamma_1,\delta_1) \sim (\gamma_2,\delta_2)$ if $\gamma_1 \sim \gamma_2$ in $\Gamma$ and $\delta_1 \sim \delta_2$ in $\Delta$ \\[-9pt]
\item $\Gamma[\Delta]$ for the \emph{lexicographical product}, whose vertex set is $V(\Gamma) \times V(\Delta)$ and where adjacency is defined as $(\gamma_1,\delta_1) \sim (\gamma_2,\delta_2)$ if $\gamma_1 \sim \gamma_2$, or $\gamma_1 = \gamma_2$ and $\delta_1 \sim \delta_2$.
\end{enumerate}

\subsection{General groups}

For this section, let $G$ be a $2$-generated finite group. We begin with a straightforward observation. Recall that $\Delta(G)$ is the graph obtained from the generating graph $\Gamma(G)$ by removing the isolated vertices. 

\begin{lemma}\label{lem:complete}
Let $G$ be a $2$-generated finite group. Then $\Delta(G)$ is complete if and only if either $G \cong C_p$ for a prime $p$ or $G \cong C_2^2$.
\end{lemma}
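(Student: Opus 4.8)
The plan is to prove each implication in turn, handling the forward direction by splitting into the cyclic and noncyclic cases; the crux will be to show that if $G$ is noncyclic and $\Delta(G)$ is complete then every non-isolated vertex is an involution. For the (easy) backward direction I would verify the two families directly. If $G\cong C_p$ then every nonidentity element generates $G$, so any two distinct elements are adjacent and the identity is adjacent to every other element; hence $\Delta(G)=\Gamma(G)=K_p$. If $G\cong C_2^2$ then the identity is isolated while the three involutions are pairwise generating, so $\Delta(G)=K_3$. Both graphs are complete.

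For the forward direction, suppose $\Delta(G)$ is complete (we may take $G$ nontrivial). Consider first the case that $G=C_n$ is cyclic. Every element is adjacent to a fixed generator of $G$, so no vertex is isolated and $V(\Delta(G))=G$. Completeness then forces the identity to be adjacent to every nonidentity element $x$, which means $\<x\>=G$; that is, every nonidentity element generates $G$, and this happens precisely when $n$ is prime.

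Now suppose $G$ is noncyclic. Then the identity is isolated, since $\<1,x\>=\<x\>\neq G$ for all $x$. The key step is the following. Let $g$ be any non-isolated vertex and choose $h$ with $\<g,h\>=G$; since $\<g^{-1}\>=\<g\>$ we have $\<g^{-1},h\>=G$, so $g^{-1}$ is also non-isolated. If $g\neq g^{-1}$, then completeness would give $\<g,g^{-1}\>=G$, contradicting $\<g,g^{-1}\>=\<g\>\neq G$. Hence $g=g^{-1}$, so every non-isolated vertex is an involution. Taking a generating pair $\{g,h\}$ of distinct elements, which exists because $G$ is noncyclic, both $g$ and $h$ are involutions; moreover $gh$ is non-isolated because $\<gh,g\>=G$, hence $(gh)^2=1$, which forces $gh=hg$. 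Thus $G$ is abelian and generated by two commuting involutions, so $G\cong C_2^2$.

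I expect the main obstacle to be the key step of the noncyclic case: the idea of pairing $g$ with its inverse to deduce that every non-isolated vertex is an involution. Once this is in hand, the group is quickly pinned down, since a noncyclic group generated by commuting involutions must be $C_2^2$. The remaining care is only in the bookkeeping of isolated vertices, which the arguments above handle directly.
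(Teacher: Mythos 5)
Your proof is correct and takes essentially the same route as the paper: the key step---that $g^{-1}$ is non-isolated whenever $g$ is, so completeness forces either $\langle g\rangle = G$ or $g=g^{-1}$, making every non-isolated vertex an involution in the noncyclic case---is exactly the paper's argument. The only (minor) difference is the endgame: the paper invokes the fact that a group generated by two involutions is dihedral $D_{2n}$ and then deduces $n=2$, whereas you apply the involution observation directly to the non-isolated vertex $gh$ to get $(gh)^2=1$ and hence commutativity, which is marginally more self-contained.
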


\begin{proof}
Let $g \in V(\Delta(G))$. Clearly $g^{-1}\in \Delta(G)$, so either $G=\langle g, g^{-1}\rangle =\langle g \rangle$ or $g=g^{-1}$. It is easy to see that $\Delta(C_n)$ is complete if and only if $n$ is prime. Now assume that $G$ is not cyclic. Then every element in $V(\Delta(G))$ is an involution. This implies that $G$ is a dihedral group of order $2n$, for some $n \geq 2$, since it is generated by two involutions. Since $D_{2n} = \<a,b\>$ where $|a|=n$ and $|b|=2$, we deduce that $n=2$, so $G \cong C_2^2$. This completes the proof.
\end{proof}

Notice that $G = \<g,h\>$ if and only if $G/\frat(G) = \<g\frat(G), h\frat(G)\>$. Therefore, if $G$ is noncyclic, then we have the lexicographical product
\begin{equation} \label{eq:graph_decomp}
\Delta(G) = \Delta(G/\frat(G))[\overline{K}_{|\frat(G)|}]
\end{equation}  
which immediately implies the following.

\begin{lemma}\label{lem:degree_frat}
Assume that $G$ is noncyclic. Then for every $g\in G$ we have 
\[
\delta_{\Gamma(G)}(g)=\delta_{\Gamma(G/\frat(G))}(g\frat(G))|\frat(G)|.
\] 
In particular $\delta(\Delta(G))=\delta(\Delta(G/\frat(G))|\frat(G)|$.
\end{lemma}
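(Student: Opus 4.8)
The plan is to read everything off from the lexicographical decomposition \eqref{eq:graph_decomp}, so that the only real content is the standard degree formula for a lexicographical product together with the observation that passing from $\Gamma(G)$ to $\Delta(G)$ leaves all degrees unchanged. First I would record that for any $g \in G$ we have $\delta_{\Gamma(G)}(g) = \delta_{\Delta(G)}(g)$: deleting isolated vertices removes no edges incident to a surviving vertex, while an isolated vertex has degree $0$ in both graphs. Hence it suffices to compute degrees inside $\Delta(G)$, and likewise inside $\Delta(G/\frat(G))$.

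Next I would recall the elementary degree formula for a lexicographical product. By the definition of $\Gamma[\Delta]$, a vertex $(\gamma,\delta)$ is adjacent to $(\gamma',\delta')$ either when $\gamma \sim \gamma'$ in $\Gamma$ (with $\delta'$ arbitrary) or when $\gamma = \gamma'$ and $\delta \sim \delta'$ in $\Delta$; these two cases are disjoint, so
\[
\delta_{\Gamma[\Delta]}(\gamma,\delta) = \delta_\Gamma(\gamma)\,|V(\Delta)| + \delta_\Delta(\delta).
\]
Applying this to \eqref{eq:graph_decomp}, where $\Delta = \overline{K}_{|\frat(G)|}$ is a null graph and hence contributes $\delta_{\overline{K}_{|\frat(G)|}}(\delta) = 0$, the second term vanishes. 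Identifying the vertex $g$ of $\Delta(G)$ with a pair having first coordinate $g\frat(G)$ under the decomposition then yields $\delta_{\Delta(G)}(g) = \delta_{\Delta(G/\frat(G))}(g\frat(G))\,|\frat(G)|$, and combining this with the first paragraph gives the claimed identity $\delta_{\Gamma(G)}(g) = \delta_{\Gamma(G/\frat(G))}(g\frat(G))\,|\frat(G)|$.

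For the ``in particular'' statement I would take the minimum over the vertices of $\Delta(G)$. The key point is that the per-vertex formula shows $\delta_{\Gamma(G)}(g) > 0$ if and only if $\delta_{\Gamma(G/\frat(G))}(g\frat(G)) > 0$, so that $g$ is a vertex of $\Delta(G)$ exactly when $g\frat(G)$ is a vertex of $\Delta(G/\frat(G))$. Minimising the identity over these vertices and pulling the constant factor $|\frat(G)|$ outside then gives $\delta(\Delta(G)) = \delta(\Delta(G/\frat(G)))\,|\frat(G)|$.

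The argument is entirely routine and there is no genuine obstacle; the only care needed is the bookkeeping that ensures the null-graph factor contributes nothing to each degree and that the correspondence of vertices between $\Delta(G)$ and $\Delta(G/\frat(G))$ respects non-isolation, so that the minimum degree transfers correctly.
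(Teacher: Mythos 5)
Your proposal is correct and follows exactly the paper's route: the paper derives this lemma as an immediate consequence of the lexicographical decomposition $\Delta(G) = \Delta(G/\frat(G))[\overline{K}_{|\frat(G)|}]$ in \eqref{eq:graph_decomp}, and your argument simply spells out the degree formula for that product (with the null-graph factor contributing nothing) together with the correspondence of isolated vertices that the decomposition already encodes. The extra bookkeeping you supply --- that degrees agree in $\Gamma(G)$ and $\Delta(G)$ and that non-isolation transfers between $g$ and $g\frat(G)$ --- is exactly what the paper leaves implicit in the word ``immediately.''
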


\begin{rem}\label{rem:cyclic}
Let us address the case where $G \cong C_n$. Here $\Delta(G)$ is the graph obtained from $\Delta(G/\frat(G))[K_{|\frat(G)|}]$ by removing all edges between elements of $\frat(G)$. Therefore, $\delta(\Delta(G))=\delta(\Delta(G/\frat(G))|\frat(G)|$, but $|\frat(G)|$ need not divide $\delta_{\Gamma(G)}(g)$ when $g \not\in \frat(G)$ (see Remark~\ref{rem:facts}\ref{it:alpha_beta}).
\end{rem}

We may also deduce the following result on connectedness.
\begin{lemma}\label{lem:connected_frat}
Let $X$ be a subset of $V(\Delta(G))$ of size at least two. Then $X$ is connected if and only if $X\frat(G)$ is connected.
\end{lemma}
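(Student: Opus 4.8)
The plan is to reduce both connectivity statements to a single statement about the Frattini quotient $\overline{G} = G/\frat(G)$. Write $\overline{g} = g\frat(G)$, let $\pi \colon V(\Delta(G)) \to V(\Delta(\overline{G}))$ be the projection $g \mapsto \overline{g}$, and set $\overline{X} = \pi(X)$, a vertex set of $\Delta(\overline{G})$; then $X\frat(G) = \pi^{-1}(\overline{X})$. The map $\pi$ is well defined and surjective, since (as encoded in the decomposition \eqref{eq:graph_decomp}) an element is isolated in $\Gamma(G)$ exactly when its image is isolated in $\Gamma(\overline{G})$, and its fibres are the Frattini cosets. I record two structural facts. (a) If $\overline{g} \neq \overline{h}$, then $g$ and $h$ are adjacent in $\Delta(G)$ if and only if $\overline{g}$ and $\overline{h}$ are adjacent in $\Delta(\overline{G})$, because $G = \langle g,h\rangle$ if and only if $\overline{G} = \langle \overline{g},\overline{h}\rangle$; thus distinct fibres are either completely joined or not joined at all, according to adjacency in $\Delta(\overline{G})$. (b) The vertices of a single fibre induce either a complete graph or an edgeless graph, the former occurring exactly when the common image generates $\overline{G}$ (which forces $G$ to be cyclic). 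The goal is to show that each of the induced subgraphs on $X$ and on $X\frat(G)$ is connected if and only if the induced subgraph on $\overline{X}$ is connected.

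The main case is $|\overline{X}| \geq 2$. The forward implications are immediate: projecting a path via $\pi$ turns it into a walk in $\Delta(\overline{G})$ by fact (a), so connectivity of the induced subgraph on $X$ (or on $X\frat(G)$), together with surjectivity onto $\overline{X}$, forces $\overline{X}$ to be connected. For the converse, assume $\overline{X}$ is connected and take $u, v \in X$. If $\overline{u} \neq \overline{v}$, choose a path $\overline{u} = \overline{w}_0 \sim \cdots \sim \overline{w}_k = \overline{v}$ in $\overline{X}$ and lift it by picking, for each $i$, some $x_i \in X$ with $\pi(x_i) = \overline{w}_i$ (possible precisely because $\overline{w}_i \in \overline{X} = \pi(X)$) and $x_0 = u$, $x_k = v$; consecutive terms lie in distinct adjacent fibres, so $x_i \sim x_{i+1}$ by fact (a). If $\overline{u} = \overline{v}$, then since $|\overline{X}| \geq 2$ and $\overline{X}$ is connected there is a neighbour $\overline{w}$ of $\overline{u}$ in $\overline{X}$ and a representative $t \in X$ over $\overline{w}$, giving $u \sim t \sim v$. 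The crucial point is that this lifting uses only one representative in each fibre met by $\overline{X}$, and $X$ supplies such a representative by the definition of $\overline{X}$; the identical argument therefore applies verbatim to $X\frat(G) = \pi^{-1}(\overline{X})$. Hence both induced subgraphs are connected if and only if $\overline{X}$ is, and so they agree.

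It remains to treat $|\overline{X}| = 1$, in which case $X$, and hence $X\frat(G)$, lies in a single Frattini coset $c$; as $|X| \geq 2$ we have $|c| = |\frat(G)| \geq 2$ and $X\frat(G) = c$. Now fact (b) settles matters directly: if $c$ induces a complete graph, then the induced subgraphs on $X$ and on $c$ are complete on at least two vertices and both connected; if $c$ induces an edgeless graph, then both are edgeless on at least two vertices and both disconnected. Either way the equivalence holds, completing the proof. I expect the main obstacle to be establishing fact (b) and handling this degenerate case cleanly — in particular verifying that a single Frattini coset induces a uniformly complete or edgeless subgraph, and that in the cyclic case a non-Frattini coset consists entirely of generators of $G$ (so induces a complete subgraph) — whereas the generic case $|\overline{X}| \geq 2$ reduces smoothly to the lifting argument driven by fact (a).
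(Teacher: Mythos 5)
Your proof is correct, and it reaches the conclusion by a genuinely different organization than the paper, although both ultimately rest on the same core fact that $G=\<g,h\>$ if and only if $G/\frat(G)=\<g\frat(G),h\frat(G)\>$. The paper works entirely inside $\Delta(G)$: given a path in $X$ it performs direct path surgery, swapping the endpoints $x_1,x_2$ for $x_1f_1,x_2f_2$, with a doubling-back trick ($xf_1,y_2,\dots,y,\dots,y_2,xf_2$) for the case $x_1=x_2$; conversely it projects a path in $X\frat(G)$ by dropping the Frattini factors. You instead factor through the quotient graph, proving the three-way equivalence that $X$, $X\frat(G)$ and $\pi(X)$ are simultaneously connected, via path projection and path lifting through the fibres of $\pi$ (your fact (a) is exactly the mod-Frattini adjacency criterion, and your $u\sim t\sim v$ detour plays the role of the paper's doubling-back trick). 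What your route buys is precision on a point the paper's proof glosses over: in a cyclic group two distinct elements of the same generating coset are adjacent, so a projected path can have repeated consecutive vertices (a walk rather than a path) and a single coset can induce a complete graph; your fibre dichotomy (fact (b)) and the explicit degenerate case $|\pi(X)|=1$ handle this cleanly, and your intermediate statement is essentially the lexicographic decomposition \eqref{eq:graph_decomp} together with Remark~\ref{rem:cyclic} made pointwise. What the paper's direct argument buys is brevity: no case split on $|\pi(X)|$ and no need to set up the quotient-graph formalism, at the modest cost of leaving the walk-versus-path collapse and the same-coset adjacency implicit.
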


\begin{proof} 
Write $F = \frat(G)$. Assume that $X$ is connected and let $x_1f_1, x_2f_2 \in XF$ be distinct. First assume that $x_1 \neq x_2$. Then there is a path $x_1=y_1,y_2,\dots,y_t=x_2$ in $\Delta(G)$ with $y_i\in X$ for $1\leq i\leq t$, and this gives the path  $x_1f_1,y_2,\dots,y_{t-1},x_2f_2$ between $x_1f_1$ and $x_2f_2$. Now assume that $x_1=x_2=x$ and fix $y\in X$ such that $y\neq x$. Then there exists a path $x=y_1,y_2,\dots,y_t=y$ in $\Delta(G)$ with $y_i\in X$ for $1\leq i\leq t$. From this we construct the path $x_1f_1=xf_1,y_2,\dots,y_{t-1},y,y_{t-1},\dots,y_2,xf_2=x_2f_2$ between $x_1f_1$ and $x_2f_2$. Therefore, in both cases, there is a path between $x_1f_1$ and $x_2f_2$, so $XF$ is connected.

Conversely, assume that $XF$ is connected and let $x_1, x_2 \in X$ be distinct. There exists a path $x_1,y_2f_2,\dots,y_{t-1}f_{t-1},x_2$ between $x_1$ and $x_2$ with $y_i\in X$ and this gives the path $x_1,y_2,\dots,y_{t-1},x_2$ in $X$ between $x_1$ and $x_2$, so $X$ is connected.
\end{proof}

We conclude by observing a relationship between group products and graph products.

\begin{lemma}\label{lem:product}
Let $G$ and $H$ be two finite groups. 
\begin{enumerate}
\item If neither $G$ nor $H$ is cyclic, then $\Gamma(G \times H)$ is a subgraph of $\Gamma(G) \times \Gamma(H)$ and $\Delta(G \times H)$ is a subgraph of $\Delta(G) \times \Delta(H)$.
\item If there are no isomorphisms between nontrivial subquotients of $G$ and $H$, then $\Gamma(G) \times \Gamma(H)$ is a subgraph of $\Gamma(G \times H)$ and $\Delta(G) \times \Delta(H)$ is a subgraph of $\Delta(G \times H)$.
\end{enumerate}
\end{lemma}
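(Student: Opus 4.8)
The plan is to work directly from the definitions of the two graph products and to exploit the coordinate projections $\pi_G \colon G \times H \to G$ and $\pi_H \colon G \times H \to H$. The vertex set of each of the four graphs in the statement is $G \times H$ (respectively the relevant subset thereof), so in each case I only need to compare edges. The one point requiring care throughout is the no-loops convention, and this is precisely what the cyclicity hypothesis in part (i) is designed to control.

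For part (i), suppose $(g_1, h_1)$ and $(g_2, h_2)$ are adjacent in $\Gamma(G \times H)$, so that $\langle (g_1, h_1), (g_2, h_2) \rangle = G \times H$. Applying $\pi_G$ and $\pi_H$ immediately yields $\langle g_1, g_2 \rangle = G$ and $\langle h_1, h_2 \rangle = H$. The only thing to verify is that these record genuine edges and not loops: since $G$ is noncyclic, $\langle g_1, g_2 \rangle = G$ forces $g_1 \neq g_2$, and likewise $h_1 \neq h_2$, so $g_1 \sim g_2$ in $\Gamma(G)$ and $h_1 \sim h_2$ in $\Gamma(H)$, which is exactly adjacency of $(g_1,h_1)$ and $(g_2,h_2)$ in $\Gamma(G) \times \Gamma(H)$. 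The statement for $\Delta$ then follows formally: the same projection argument shows that a non-isolated vertex $(g,h)$ of $\Gamma(G \times H)$ projects to non-isolated vertices of $\Gamma(G)$ and $\Gamma(H)$, so $V(\Delta(G \times H)) \subseteq V(\Delta(G)) \times V(\Delta(H))$, and since $\Delta(G) \times \Delta(H)$ is the subgraph of $\Gamma(G) \times \Gamma(H)$ induced on this vertex set, the edge inclusion is inherited from the $\Gamma$ case.

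For part (ii), suppose $(g_1, h_1) \sim (g_2, h_2)$ in $\Gamma(G) \times \Gamma(H)$, so that $\langle g_1, g_2 \rangle = G$ and $\langle h_1, h_2 \rangle = H$, and set $K = \langle (g_1, h_1), (g_2, h_2) \rangle$. Then $\pi_G(K) = G$ and $\pi_H(K) = H$, so $K$ is a subdirect product of $G \times H$. The main tool is Goursat's lemma, which identifies such a $K$ with the preimage of the graph of an isomorphism $G/N \cong H/M$ for some $N \trianglelefteq G$ and $M \trianglelefteq H$, with $K = G \times H$ precisely when this common quotient is trivial. Since $G/N$ and $H/M$ are quotients, hence subquotients, of $G$ and $H$, the hypothesis that $G$ and $H$ share no isomorphic nontrivial subquotients forces $G/N \cong H/M$ to be trivial, whence $K = G \times H$ and $(g_1, h_1) \sim (g_2, h_2)$ in $\Gamma(G \times H)$ (this is an edge, not a loop, as $g_1 \neq g_2$). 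The passage to $\Delta$ is again routine: if $g$ and $h$ are non-isolated in $\Gamma(G)$ and $\Gamma(H)$, then picking neighbours $g'$ and $h'$ and applying the above to $(g,h) \sim (g',h')$ shows $(g,h) \in V(\Delta(G \times H))$.

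I expect the only substantial step to be the appeal to Goursat's lemma in part (ii); everything else amounts to projection bookkeeping together with attention to the no-loops convention. The asymmetry between the two parts is exactly that part (i) needs noncyclicity so that surjective projections yield loop-free edges, whereas part (ii) needs the absence of common nontrivial subquotients so that a subdirect product is forced to be the full direct product.
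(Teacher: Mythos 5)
Your proof is correct and follows essentially the same route as the paper's: coordinate projections for part (i), Goursat's lemma applied to the subdirect product $K=\langle(g_1,h_1),(g_2,h_2)\rangle$ for part (ii), and the observation that $(g,h)$ is isolated in $\Gamma(G)\times\Gamma(H)$ exactly when $g$ or $h$ is isolated to transfer both statements to $\Delta$. Your explicit bookkeeping of the no-loops convention is slightly more careful than the paper's, but it is the same argument.
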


\begin{proof}
First assume that neither $G$ nor $H$ is cyclic. Assume that $(g_1,h_1)$ and $(g_2,h_2)$ are adjacent in $\Gamma(G \times H)$. Then $\< (g_1,h_1), (g_2,h_2) \> = G \times H$, so $\<g_1,g_2\> = G$ and $\<h_1,h_2\>=H$. Since neither $G$ nor $H$ is cyclic, we conclude that $g_1$ and $g_2$ are adjacent in $\Gamma(G)$ and $h_1$ and $h_2$ are adjacent in $\Gamma(H)$. Therefore, $(g_1,h_1)$ and $(g_2,h_2)$ are adjacent in $\Gamma(G) \times \Gamma(H)$. This prove that $\Gamma(G \times H)$ is a subgraph of $\Gamma(G) \times \Gamma(H)$.

Now assume that there are no isomorphisms between nontrivial subquotients of $G$ and $H$. Assume that $(g_1,h_1)$ and $(g_2,h_2)$ are adjacent in $\Gamma(G) \times \Gamma(H)$. Then $\<g_1,g_2\> = G$ and $\<h_1,h_2\> = H$, so $K = \<(g_1,h_1),(g_2,h_2)\>$ is a subgroup of $G \times H$ that projects onto both $G$ and $H$. By Goursat's Lemma (see \cite[p.75]{lang}, for example), since there are no isomorphisms between nontrivial subquotients of $G$ and $H$, it must be that $K=G \times H$. Therefore, $(g_1,h_1)$ and $(g_2,h_2)$ are adjacent in $\Gamma(G \times H)$, and we conclude that $\Gamma(G) \times \Gamma(H)$ is a subgraph of $\Gamma(G \times H)$.

The claims about $\Delta$ follow from the claims about $\Gamma$ together with the observation that $(g,h)$ is isolated in $\Gamma(G) \times \Gamma(H)$ if and only if $g$ is isolated in $\Gamma(G)$ or $h$ is isolated in $\Gamma(H)$.
\end{proof}

The following is an immediate consequence of Lemma~\ref{lem:product}.
\begin{cor}\label{cor:product}
Let $G$ and $H$ be noncyclic groups of coprime order. Then we have $\Delta(G \times H) = \Delta(G) \times \Delta(H)$.
\end{cor}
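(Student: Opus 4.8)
The plan is to apply both parts of Lemma~\ref{lem:product} and to observe that the coprimality hypothesis simultaneously guarantees the hypotheses of each part; the two resulting subgraph inclusions then combine to give the claimed equality.

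First I would confirm that both parts of Lemma~\ref{lem:product} are available. By assumption neither $G$ nor $H$ is cyclic, so Lemma~\ref{lem:product}(i) applies directly and yields that $\Delta(G \times H)$ is a subgraph of $\Delta(G) \times \Delta(H)$. For Lemma~\ref{lem:product}(ii) I would check that $G$ and $H$ have no isomorphic nontrivial subquotients: every subquotient of $G$ has order dividing $|G|$ and every subquotient of $H$ has order dividing $|H|$, so a common nontrivial subquotient would have order greater than $1$ dividing $\gcd(|G|,|H|)=1$, which is impossible. Hence Lemma~\ref{lem:product}(ii) applies and gives that $\Delta(G) \times \Delta(H)$ is a subgraph of $\Delta(G \times H)$.

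Combining the two inclusions gives the equality, but I would take a moment to confirm that the two graphs share the same vertex set, so that mutual containment genuinely forces equality. Both $\Delta(G \times H)$ and $\Delta(G) \times \Delta(H)$ arise from graphs on the common vertex set $G \times H$, namely $\Gamma(G \times H)$ and $\Gamma(G) \times \Gamma(H)$, by deleting isolated vertices: indeed, by the observation recorded at the end of the proof of Lemma~\ref{lem:product}, the non-isolated vertices of $\Gamma(G) \times \Gamma(H)$ are exactly the pairs $(g,h)$ with $g$ non-isolated in $\Gamma(G)$ and $h$ non-isolated in $\Gamma(H)$, which is precisely $V(\Delta(G)) \times V(\Delta(H))$. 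Consequently the inclusions $V(\Delta(G \times H)) \subseteq V(\Delta(G) \times \Delta(H))$ and $V(\Delta(G) \times \Delta(H)) \subseteq V(\Delta(G \times H))$ force the vertex sets to coincide, and the corresponding mutual containment of edge sets forces the edge sets to coincide as well.

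The argument is purely formal, so I do not expect any genuine obstacle; the only point requiring care is the identification of the vertex sets of the two $\Delta$-graphs, which rests entirely on the isolated-vertex observation already established in the proof of Lemma~\ref{lem:product}.
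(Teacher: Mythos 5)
Your proof is correct and follows exactly the route the paper intends: the corollary is stated there as an immediate consequence of Lemma~\ref{lem:product}, with part (i) applying since neither group is cyclic and part (ii) applying because coprime orders rule out isomorphic nontrivial subquotients via Lagrange's theorem, just as you argue. Your additional verification that the two $\Delta$-graphs have the same vertex set (via the isolated-vertex observation) is a sound and welcome fleshing-out of the detail the paper leaves implicit.
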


\subsection{Nilpotent groups}

Let $G$ be a finite $2$-generated nilpotent group, and write 
\begin{equation} \label{eq:notation}
|G| = \prod_{i=1}^{r} p_i^{a_i} \prod_{j=1}^{s} q_j^{b_j}
\end{equation} 
where $p_1,\dots,p_r,q_1,\dots,q_s$ are the distinct prime divisors of $|G|$ and $p_1,\dots,p_r$ are exactly the prime divisors of $|G|$ for which the Sylow subgroups of $G$ are cyclic. 

Recall that $G=\<g,h\>$ if and only if $G/\frat(G) = \<g\frat(G),h\frat(G)\>$. Here 
\begin{equation} \label{eq:frat}
G/\frat(G) = C_{p_1} \times \cdots \times C_{p_r} \times C_{q_1}^2 \times \cdots \times C_{q_s}^2.
\end{equation}
Therefore,
\begin{equation} \label{eq:gen_frat}
G/\frat(G) = \<(g_1,\dots,g_r,x_1,\dots,x_s), (h_1,\dots,h_r,y_1,\dots,y_s) \>
\end{equation}
if and only if $g_i \neq 1$ or $h_i \neq 1$ for all $1 \leq i \leq r$ and $1 \neq \< x_i \> \neq \< y_i \> \neq 1$ for all $1 \leq i \leq s$. 

\begin{rem} \label{rem:facts}
Let us record some consequences of \eqref{eq:gen_frat}.
\begin{enumerate}[(i)]
\item Let $P_G(2)$ be the probability that a pair of uniformly randomly chosen elements of $G$ generate $G$. Then
\begin{equation*}
P_G(2) = \prod_{i=1}^{r}\left(1-\frac{1}{p_i^2}\right) \prod_{j=1}^{s}\left(1-\frac{1}{q_j^2}\right)\left(1-\frac{1}{q_j}\right).
\end{equation*} \label{it:prob}
\item Since $g \in \Delta(G)$ if and only if $q_1 \dots q_s$ divides $|\frat(G)g|$,
\begin{equation*}
|V(\Delta(G))| = |G| \prod_{j=1}^{s} \left( 1 - \frac{1}{q_j^2} \right),
\end{equation*}
and consequently the proportion of nonisolated vertices in $\Gamma(G)$ is
\begin{equation*}
\frac{|V(\Delta(G)|}{|G|} = \prod_{j=1}^{s} \left( 1 - \frac{1}{q_j^2} \right) \geq \prod_{\text{$p$ prime}} \left( 1 - \frac{1}{p^2} \right) = \frac{6}{\pi^2}.
\end{equation*} \label{it:nonisolated}
\item Let $I \subseteq \{1,\dots,r\}$. Write $\alpha_I$ for the number of elements $g \in G$ such that $\frat(G)g$ has order $\prod_{i \in I} p_i \prod_{j=1}^{s} q_j$ and write $\beta_I$ for the degree in $\Gamma(G)$ of such an element. Then
\begin{equation*}
\alpha_I = |G| \prod_{i \in I} \left( 1 - \frac{1}{p_i} \right) \prod_{i \not\in I} \frac{1}{p_i} \prod_{j=1}^{s} \left( 1 - \frac{1}{q_j^2} \right)
\end{equation*}
and
\begin{equation*}
\beta_I = |G| \prod_{i \not\in I} \left( 1 - \frac{1}{p_i} \right) \prod_{j = 1}^{s} \left( 1 - \frac{1}{q_j} \right) - \epsilon,
\end{equation*}
where $\epsilon$ is $1$ if $G$ is cyclic and $|I|=r$ and is $0$ otherwise. (Note that $\epsilon$ accounts for the fact that we do not consider loops in the generating graph of a cyclic group). \label{it:alpha_beta}
\item Let us record that
\begin{equation*}
\delta(\Delta(G)) = \beta_{\emptyset} = |G| \prod_{i=1}^{r} \left( 1 - \frac{1}{p_i} \right) \prod_{j = 1}^{s} \left( 1 - \frac{1}{q_j} \right)
\end{equation*}
and every vertex $g$ of minimal degree in $\Delta(G)$ satisfies $|g\frat(G)|=q_1 \cdots q_s$. \label{it:delta}
\end{enumerate}
\end{rem}

Remark~\ref{rem:facts}\ref{it:nonisolated} demonstrates that for nilpotent groups $G$ the proportion of nonisolated vertices of $\Gamma(G)$ is at least $6/\pi^2$. In the following example, we show that, even within the class of supersoluble groups, we can find a sequence of groups $(G_d)_d$ for which $|V(\Delta(G_d))|/|G_d| \to 0$.

\begin{ex} \label{ex:isolated}
Let $H = C_2^2$ and let $h_1$, $h_2$, $h_3$ be the nontrivial elements of $H$. Let $p_1 < p_2 < p_3 < \cdots$ be the odd prime numbers. Fix $d \geq 1$. For each $1 \leq i \leq d$, write $N_i = C_{p_i}^3$ and define
\[
G_d = \left(\prod_{i=1}^{d} N_i \right) \rtimes H
\]
where for each $1 \leq i \leq d$ the subgroup $N_i$ is $H$-stable and for $(n_{i1},n_{i2},n_{i3}) \in N_i$ and $h_j \in H$
\[
(n_{i1},n_{i2},n_{i3})^{h_j} = \left\{ \begin{array}{ll} (n_{i1},n_{i2}^{-1},n_{i3}^{-1}) & \text{if $j=1$} \\ (n_{i1}^{-1},n_{i2},n_{i3}^{-1}) & \text{if $j=2$} \\ (n_{i1}^{-1},n_{i2}^{-1},n_{i3}) & \text{if $j=3$.} \end{array} \right.
\]  
By \cite[Proposition~2.2]{luma}, the vertex $(n_{11},\dots,n_{d3};h)$ is nonisolated in $\Gamma(G_d)$ if and only if $h=h_j$ for some $1 \leq j \leq 3$ and $n_{ij} \neq 0$ for all $1 \leq i \leq d$. Therefore,
\[
\frac{|V(\Delta(G_d))|}{|G_d|} = \frac{3 \prod_{i=1}^{d} p_i^2(p_i-1)}{4 \prod_{i=1}^{d} p_i^3} = \frac{3}{4} \prod_{i=1}^{d}\left(1-\frac{1}{p_1}\right),
\]
which tends to zero as $d$ tends to infinity.

Let us note that two vertices $(n_{11},\dots,n_{d3};h_j)$ and $(m_{11},\dots,m_{d3};h_k)$ of $\Delta(G_d)$ are adjacent if $j \neq k$ and for each $1 \leq i \leq d$ we have $n_{dl} \neq m_{dl}$ where $l \not\in \{j,k\}$, so $\Delta(G_d)$ is a regular graph of degree $2 \prod_{i=1}^{d} p_i(p_i-1)^2$.
\end{ex}

We conclude by studying the extent to which $\Gamma(G)$ determines $G$.
\begin{prop} \label{prop:determine}
Let $G$ and $H$ be finite $2$-generated nilpotent groups. Then $\Gamma(G) \cong \Gamma(H)$ if and only if $G/\frat(G) \cong H/\frat(H)$ and $|\frat(G)| = |\frat(H)|$.
\end{prop}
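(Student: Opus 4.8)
The plan is to prove the two directions separately, using the lexicographic decomposition \eqref{eq:graph_decomp} for the ``if'' direction and a short list of isomorphism invariants of $\Gamma(G)$ for the ``only if'' direction.

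For the ``if'' direction, suppose $G/\frat(G) \cong H/\frat(H)$ and $|\frat(G)| = |\frat(H)|$. If $G/\frat(G)$ is cyclic then, since a finite nilpotent group is cyclic exactly when its Frattini quotient is cyclic, both $G$ and $H$ are cyclic; as $|G| = |G/\frat(G)|\,|\frat(G)| = |H/\frat(H)|\,|\frat(H)| = |H|$, we get $G \cong C_{|G|} \cong H$ and hence $\Gamma(G) \cong \Gamma(H)$. If $G/\frat(G)$ is noncyclic then so is $H/\frat(H)$, and \eqref{eq:graph_decomp} gives $\Delta(G) = \Delta(G/\frat(G))[\overline{K}_{|\frat(G)|}] \cong \Delta(H/\frat(H))[\overline{K}_{|\frat(H)|}] = \Delta(H)$. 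Since $\Gamma(G)$ is the disjoint union of $\Delta(G)$ with its $|G| - |V(\Delta(G))|$ isolated vertices, and the corresponding counts for $G$ and $H$ agree (because $|G| = |H|$ and $|V(\Delta(G))| = |V(\Delta(H))|$), we conclude $\Gamma(G) \cong \Gamma(H)$.

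For the ``only if'' direction, assume $\Gamma(G) \cong \Gamma(H)$; I will recover $G/\frat(G)$ and $|\frat(G)|$ from graph invariants. First, $|G| = |V(\Gamma(G))| = |V(\Gamma(H))| = |H|$, so $G$ and $H$ have the same order and hence the same set of prime divisors. Writing $|G|$ as in \eqref{eq:notation}, Remark~\ref{rem:facts}\ref{it:nonisolated} shows that (for $|G| \geq 2$) the number of isolated vertices of $\Gamma(G)$ equals $|G|(1 - \prod_{j}(1-1/q_j^2))$, which is positive precisely when $s \geq 1$, that is, precisely when $G$ is noncyclic; the trivial group is dealt with at once since then $\Gamma(G)$ is a single vertex. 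Thus $G$ is cyclic if and only if $\Gamma(G)$ has no isolated vertices, and the same holds for $H$, so $G$ is cyclic if and only if $H$ is. If both are cyclic then $G \cong C_{|G|} \cong H$ and we are done, so assume both are noncyclic. By Remark~\ref{rem:facts}\ref{it:alpha_beta} the nonzero degrees occurring in $\Gamma(G)$ are the values $\beta_I = |G|\prod_{i \notin I}(1-1/p_i)\prod_j(1-1/q_j)$ for $I \subseteq \{1,\dots,r\}$, and since each factor $1-1/p_i$ lies in $(0,1)$ the maximum degree is attained uniquely at $I = \{1,\dots,r\}$, so that $\max_{v} \delta_{\Gamma(G)}(v) = |G|\prod_{q \in Q}(1 - 1/q)$, where $Q = \{q_1,\dots,q_s\}$ is the set of primes whose Sylow subgroup in $G$ is noncyclic. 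Because $|G| = |H|$ and the maximum degree is an isomorphism invariant, the corresponding sets $Q_G$ and $Q_H$ satisfy $\prod_{q \in Q_G}(1-1/q) = \prod_{q \in Q_H}(1-1/q)$. Granting the injectivity statement below, this forces $Q_G = Q_H$; since $|G| = |H|$ have the same prime divisors and the cyclic-Sylow primes are exactly the complement, we also get $P_G = P_H$, so by \eqref{eq:frat} we obtain $G/\frat(G) \cong H/\frat(H)$, and finally $|\frat(G)| = |G|/|G/\frat(G)| = |H|/|H/\frat(H)| = |\frat(H)|$.

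The main obstacle is the number-theoretic fact that the map sending a finite set of primes $Q$ to $\prod_{q \in Q}(1-1/q) \in \mathbb{Q}$ is injective, i.e. that the single rational $\prod_{q\in Q}(1-1/q)$ recovers the whole set of ``noncyclic primes''. To prove it, write the value in lowest terms starting from $\prod_{q\in Q}(q-1)/\prod_{q\in Q}q$. For $M = \max Q$, no $q \in Q$ satisfies $M \mid q-1$ (that would force $q \geq M+1$), so $M$ does not divide the numerator $\prod_{q\in Q}(q-1)$ and therefore survives as the largest prime dividing the reduced denominator; hence $\max Q$ is determined by the value, the empty set being the unique $Q$ giving the value $1$. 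Dividing by $1 - 1/M$ and inducting on $|Q|$ then recovers all of $Q$. Beyond this lemma, the only remaining care needed is the minor bookkeeping excluding the trivial group and confirming that cyclicity is detected by the absence of isolated vertices; both are handled by the counting formulas in Remark~\ref{rem:facts}.
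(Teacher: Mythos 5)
Your proof is correct, and while your ``if'' direction coincides with the paper's (the decomposition \eqref{eq:graph_decomp}, with the cyclic case handled by noting that $|G|=|H|$ forces $G \cong H$), your ``only if'' direction takes a genuinely different route. The paper recovers the cyclic-Sylow primes by computing the single ratio
\[
\frac{|V(\Delta(G))|}{|\{ v \in \Gamma(G) \mid \delta(v) = \delta(\Delta(G)) \}|} = \frac{|V(\Delta(G))|}{\alpha_{\emptyset}} = p_1 \cdots p_r,
\]
a squarefree integer whose prime factorization hands over the set $\{p_1,\dots,p_r\}$ immediately, with no further number theory. You instead read off the set $Q$ of noncyclic-Sylow primes from the maximum degree $|G|\prod_{q \in Q}(1-1/q)$, which requires the auxiliary fact that $Q \mapsto \prod_{q \in Q}(1-1/q)$ is injective on finite sets of primes; your proof of that lemma is sound (the largest element of $Q$ is the largest prime of the reduced denominator, since $M = \max Q$ is prime and each factor $q-1$ of the numerator satisfies $1 \leq q-1 < M$, and one then strips off $1-1/M$ and inducts). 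The trade-off: the paper's choice of invariant is engineered precisely so that the arithmetic is trivial, but it needs two counts (nonisolated vertices and minimal-degree vertices, via Remark~\ref{rem:facts}\ref{it:nonisolated} and \ref{it:alpha_beta}--\ref{it:delta}), whereas your argument uses only the maximum degree plus a clean, reusable number-theoretic lemma. A minor point in your favour: you explicitly handle the trivial group, for which the paper's assertion that $\Gamma(G)$ has no isolated vertices iff $G$ is cyclic is, strictly speaking, an edge case, since $\Gamma(C_1)$ is a single isolated vertex; your observation that $|G|=|H|=1$ disposes of it is the right fix.
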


\begin{proof}
If $G/\frat(G) \cong H/\frat(H)$ and $|\frat(G)| = |\frat(H)|$, then \eqref{eq:graph_decomp} (together with Remark~\ref{rem:cyclic}) implies that $\Delta(G) \cong \Delta(H)$ and since $|G|=|H|$ we deduce that $\Gamma(G) \cong \Gamma(H)$. Therefore, it remains to show that from $\Gamma(G)$ we can determine the order of $\frat(G)$ and the isomorphism type of $G/\frat(G)$. Observe that $\Gamma(G)$ has no isolated vertices if and only if $G$ is cyclic. Now assume that $G$ is not cyclic. By \eqref{eq:frat}, we see that it is sufficient to deduce the set $\{ p_1, \dots, p_r\}$ from $\Gamma(G)$ and we can do this by computing the following quotient using Remark~\ref{rem:facts}\ref{it:nonisolated} and~\ref{it:alpha_beta}:
\[
\frac{|\{ v \in \Gamma(G) \mid \delta(v) \neq 0 \}|}{|\{ v \in \Gamma(G) \mid \delta(v) = \delta(\Delta(G)) \}|} = \frac{|V(\Delta(G))|}{\alpha_{\emptyset}} = \frac{|G| \prod_{j=1}^{s}\left( 1-\frac{1}{q_j^2}\right)}{|G| \prod_{i=1}^{r} \frac{1}{p_i} \prod_{j=1}^{s} \left( 1 - \frac{1}{q_j^2} \right)} = p_1 \cdots p_r. \qedhere
\]
\end{proof}

\section{Connectivity} \label{s:conn}

Let $\Gamma$ be a finite graph. Recall that the \emph{(vertex) connectivity} of $\Gamma$, written $\kappa(\Gamma)$, is the least size of a subset $X \subseteq V(\Gamma)$ such that the induced subgraph on $V(\Gamma) \setminus X$ is disconnected (if $\Gamma = K_n$, then we say that $\kappa(\Gamma)=n-1$). It is clear that $\kappa(\Gamma) \leq \delta(\Gamma)$ and we say that $\Gamma$ is \emph{maximally connected} if $\kappa(\Gamma) = \delta(\Gamma)$.

In this section, we will prove Theorem~\ref{thm:conn}, which asserts that $\Delta(G)$ is maximally connected if $G$ is a finite $2$-generated nilpotent group. We begin with a general result.

\begin{lemma}\label{lem:conn_frat}
Let $G$ be a finite $2$-generated group. Then $\kappa(\Delta(G))=\kappa(\Delta(G/\frat(G)))|\frat(G)|$.
\end{lemma}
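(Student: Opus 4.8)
The plan is to establish the two inequalities $\kappa(\Delta(G)) \le \kappa(\Delta(G/\frat(G)))\,|\frat(G)|$ and $\kappa(\Delta(G)) \ge \kappa(\Delta(G/\frat(G)))\,|\frat(G)|$ separately, writing $\overline{G} = G/\frat(G)$, $F = \frat(G)$ and $f = |F|$, and letting $\pi \colon G \to \overline{G}$ be the quotient map. The case $f = 1$ is immediate, so assume $f \ge 2$. The crucial structural observations are that $\pi$ carries $V(\Delta(G))$ onto $V(\Delta(\overline{G}))$ with every fibre $\pi^{-1}(\overline\gamma)$ of size $f$ (since $g$ is nonisolated exactly when $\pi(g)$ is), and that, by the Frattini property $G = \<g,g'\>$ iff $\overline{G} = \<\pi(g),\pi(g')\>$, any two full fibres over adjacent vertices of $\Delta(\overline{G})$ are completely joined in $\Delta(G)$. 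Consequently the full preimage $\pi^{-1}(A)$ of a connected set $A$ with $|A|\ge 2$ is connected in $\Delta(G)$. Throughout, the main engine is Lemma~\ref{lem:connected_frat}, which converts connectivity of an arbitrary vertex set into connectivity of its saturation $XF$ and, importantly, holds for every $2$-generated group, so that the cyclic case requires no special pleading.

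For the upper bound, assuming $\Delta(\overline{G})$ is not complete, I would take a minimum cut set $S$ of $\Delta(\overline{G})$ and set $T = \pi^{-1}(S)$, so $|T| = f|S| = f\,\kappa(\Delta(\overline{G}))$. Since $\Delta(\overline{G})\setminus S$ splits into nonempty parts $A_1, A_2$ with no edges between them, the corresponding unions of fibres $\pi^{-1}(A_1)$ and $\pi^{-1}(A_2)$ are nonempty and have no edges between them in $\Delta(G)$ (an edge would force its two projections to be equal or adjacent, which is impossible). Hence $T$ is a cut set of $\Delta(G)$ and $\kappa(\Delta(G)) \le f\,\kappa(\Delta(\overline{G}))$.

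For the lower bound, again with $\Delta(\overline{G})$ not complete, I would take a minimum cut set $T$ of $\Delta(G)$ and let $S = \{\overline\gamma : \pi^{-1}(\overline\gamma)\subseteq T\}$ record the fibres entirely deleted by $T$. Suppose for contradiction that $|T| < f\,\kappa(\Delta(\overline{G}))$; then $|S| \le |T|/f < \kappa(\Delta(\overline{G})) \le |V(\Delta(\overline{G}))| - 1$, so $S$ is not a cut set and $A = V(\Delta(\overline{G})) \setminus S$ is connected with $|A| \ge 2$. Writing $X = V(\Delta(G)) \setminus T$, we have $\pi(X) = A$ and $XF = \pi^{-1}(A)$, which is connected by the observation of the first paragraph; Lemma~\ref{lem:connected_frat} then forces $X$ to be connected, contradicting that $T$ is a cut set. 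Therefore $\kappa(\Delta(G)) \ge f\,\kappa(\Delta(\overline{G}))$, and the two bounds give equality.

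It remains to treat the case where $\Delta(\overline{G})$ is complete, which by Lemma~\ref{lem:complete} means $\overline{G} \cong C_p$ or $\overline{G} \cong C_2^2$; these correspond to $G$ being a cyclic $p$-group or a $2$-group with $G/\frat(G) \cong C_2^2$. I expect this to be the main obstacle, for two reasons. First, the cut-set arguments above genuinely rely on $\Delta(\overline{G})$ having a cut set, and they must be replaced by a direct computation against the convention $\kappa(K_n) = n-1$: for noncyclic $G$, equation~\eqref{eq:graph_decomp} gives the complete multipartite graph $\Delta(\overline{G})[\overline{K}_f]$, whose connectivity is the standard $f(n-1)$. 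Second, and more delicately, in the cyclic case $\Delta(G)$ is not literally a lexicographical product with $\overline{K}_f$: as Remark~\ref{rem:cyclic} indicates, the fibres over generators of $\overline{G}$ are cliques rather than independent sets, so I would argue directly that the near-complete graph obtained from $K_{|G|}$ by emptying the Frattini fibre has connectivity equal to its minimum degree $f(p-1) = f\,\kappa(\Delta(C_p))$. Verifying these boundary computations, and checking that they agree with the $\kappa(K_n)=n-1$ convention, is where the real care is needed.
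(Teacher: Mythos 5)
Your proposal is correct and follows essentially the same route as the paper: the same dichotomy on whether $\Delta(G/\frat(G))$ is complete (via Lemma~\ref{lem:complete}), with Lemma~\ref{lem:connected_frat} and the fibre structure of the quotient map driving both inequalities in the generic case, and direct computations for the two complete-quotient cases $G \cong C_{p^t}$ and $G$ a $2$-group with $G/\frat(G) \cong C_2^2$. The only cosmetic differences are that you obtain the lower bound by a contradiction that counts fully deleted fibres, where the paper shows a maximum disconnected subset $\Sigma$ is Frattini-saturated (i.e.\ $\Sigma\frat(G)=\Sigma$), and that you invoke the standard connectivity of the complete multipartite graph $\Delta(G/\frat(G))[\overline{K}_{|\frat(G)|}]$ where the paper argues from scratch.
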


\begin{proof}
Let us begin by fixing some notation. First write $F=\frat(G)$ and now write $\alpha=\kappa(\Delta(G))$, $\beta=\kappa(\Delta(G/F))$, $\mu=|V(\Delta(G))|$ and $\nu=|V(\Delta(G/F))|$. Notice that $\mu=\nu|F|$. We want to prove that $\alpha =  \beta|F|$.

First assume that $\Delta(G/F))$ is a complete graph. By Lemma~\ref{lem:complete}, there are two possibilities: either $G/F$ has prime order or $G/F \cong C_2^2$.

For now assume that $G/F \cong C_p$, so $\nu=p$ and $\beta=p-1$. Now there exists $t \geq 1$ such that $G = C_{p^t}$ and $|F|=p^{t-1}$. Clearly, $F$ is a disconnected subset of $\Delta(G)$, so $\alpha \leq |G|-|F|=p^t-p^{t-1}$. However, a disconnected subset cannot contain any of $\phi(p^t) = p^t - p^{t-1}$ elements of $G$ that generate $G$, so $\alpha \geq p^t-p^{t-1}$. Therefore, we conclude that $\alpha=p^t-p^{t-1} = \beta|F|$.

Next assume that $G/F \cong C_2^2$, so $\nu=3$ and $\beta=2$. In this case, $G$ is a non-cyclic $2$-generated group of order $2^t$, for some $t \geq 2$, and $|F|=2^{t-2}$. If $g \notin F$, then $gF$ is a disconnected subset of $\Delta(G)$, so $\alpha \leq |V(\Delta(G))|-|F|=2|F|$. Now suppose that $X \subseteq V(\Delta(G))$ is a disconnected subset of size strictly greater than $|F|$. Then there exist $x,y \in X$ such that $\<x,y\>=G$, but then for all $z \in X$ either $\<x,z\>=G$ or $\<y,z\>=G$, so $X$ is connected, which is a contradiction. Therefore, any disconnected subset of $\Delta(G)$ has size at most $F$, so we conclude that $\alpha \geq |V(G)|-|F| =2|F|$. Consequently $\alpha = 2|F| = \beta|F|$.
	
We may now assume that $\Delta(G/F)$ is not a complete graph, or equivalently, that $\nu-\beta \geq 2$. There exists $\Omega \subseteq V(\Delta(G))$ of size $|\Omega|=\nu-\beta$ such that $\{ gF \mid g \in \Omega \}$ is a disconnected subset of $V(\Delta(G/F))$. By Lemma~\ref{lem:connected_frat}, $\Lambda=\{gf \mid g \in \Omega \text{ and } f \in F\}$ is a disconnected subset of $V(\Delta(G))$. Hence 
\[
\alpha \leq \mu-|\Lambda|=\nu|F|-(\nu-\beta)|F|=\beta|F|.
\] 
Let $\Sigma$ be a disconnected subset of $\Delta(V(G))$ of size $\mu-\alpha$. Since $\Sigma F$ is disconnected and $\mu-\alpha$ is the largest size of a disconnected subset of $V(\Delta(G))$, we deduce $\Sigma F=\Sigma$, so in particular $\{ gF \mid g \in \Sigma \}$ is a disconnected subset of $V(\Delta(G/F))$ of size $|\Sigma|/|F|$. Hence 
\[
\beta \leq \nu-\frac{|\Sigma|}{|F|}=\frac{\mu}{|F|}-\frac{\mu-\alpha}{|F|}=\frac{\alpha}{|F|}.
\]
We can therefore conclude that, in all cases, $\alpha = \beta|F|$, as required.
\end{proof}

For the remainder of this section, assume that $G$ is a finite $2$-generated nilpotent group. Let us first consider three special cases.

\begin{lemma}\label{lem:conn_cyclic}
Let $G \cong C_n$. Then $\kappa(\Delta(G)))=\delta(\Delta(G)))=\phi(n)$.
\end{lemma}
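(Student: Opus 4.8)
Throughout I assume $n\geq 2$ (for $n=1$ the graph is a single vertex and the statement degenerates). The plan is to prove the two inequalities $\kappa(\Delta(C_n))\leq\phi(n)$ and $\kappa(\Delta(C_n))\geq\phi(n)$ separately. The first is immediate: one always has $\kappa\leq\delta$, and specialising Remark~\ref{rem:facts}\ref{it:delta} to the cyclic case (where $s=0$ and the primes dividing $n$ are exactly $p_1,\dots,p_r$) gives $\delta(\Delta(C_n))=n\prod_{i=1}^{r}(1-1/p_i)=\phi(n)$. So the content of the lemma is the lower bound.

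The key observation driving the lower bound is that every generator of $C_n$ is a \emph{universal vertex} of $\Delta(C_n)$: if $|g|=n$, then for any vertex $h\neq g$ we have $\langle g,h\rangle=\langle g\rangle=C_n$, so $g$ is adjacent to every other vertex. There are exactly $\phi(n)$ such generators. I would then argue as follows. Let $X\subseteq V(\Delta(C_n))$ with $|X|<\phi(n)$. Since the $\phi(n)$ generators cannot all lie in $X$, some generator $u$ survives; as $u$ is adjacent to all other vertices, every remaining vertex is joined to $u$ in the induced subgraph, which is therefore connected. Hence no set of fewer than $\phi(n)$ vertices disconnects $\Delta(C_n)$, i.e.\ $\kappa(\Delta(C_n))\geq\phi(n)$. (One could alternatively apply Lemma~\ref{lem:conn_frat} to reduce to squarefree $n$, but the universal-vertex argument treats all $n$ uniformly.)

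Combining the two inequalities yields $\kappa(\Delta(C_n))=\delta(\Delta(C_n))=\phi(n)$. The one point requiring mild care is the case where $\Delta(C_n)$ is complete, which by Lemma~\ref{lem:complete} happens precisely when $n$ is prime: there the ``surviving universal vertex'' argument only shows non-disconnectability, and one instead invokes the convention $\kappa(K_n)=n-1=\phi(n)$ directly. Beyond this bookkeeping there is no real obstacle; the whole proof rests on recognising the universality of the generators, after which both bounds fall out at once.
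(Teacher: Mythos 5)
Your proof is correct and follows essentially the same route as the paper: compute $\delta(\Delta(C_n))=\phi(n)$ and then use the fact that any vertex set of size exceeding $n-\phi(n)$ contains a generator, which is universal in $\Delta(C_n)$ and so forces connectedness of the induced subgraph. Your explicit treatment of the complete case ($n$ prime, via the convention $\kappa(K_n)=n-1$) is a small point of care the paper leaves implicit, but the argument is the same.
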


\begin{proof}
Since $\delta(\Delta(C_n)) = \delta(1) = \phi(n)$, it remains to show that $\kappa(\Delta(C_n)) \geq \phi(n)$. Now let $X\subseteq V(\Delta(C_n))$. If $|X|>n-\phi(n)$, then there exists $x \in X$ with $\< x \>=G$. Since $x$ is adjacent to all the other vertices of $\Delta(G)$, the subgraph of $\Delta(C_n)$ induced by $X$ is connected. Therefore, $\kappa(\Delta(C_n)) = \phi(n) = \delta(\Delta(C_n))$.
\end{proof}

\begin{lemma}\label{lem:conn_ptimesp}
Let $G \cong C_p^2$ where $p$ is prime. Then $\kappa(\Delta(G)))=\delta(\Delta(G)))=\phi(p^2) = p^2-p$.
\end{lemma}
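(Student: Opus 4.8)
The plan is to exploit the explicit structure of $\Delta(C_p^2)$ as a complete multipartite graph. I would identify $G=C_p^2$ with the $2$-dimensional vector space $\mathbb{F}_p^2$; then a pair of elements generates $G$ precisely when it is linearly independent, so the nonidentity vectors are exactly the $p^2-1$ vertices of $\Delta(G)$ (the identity being the unique isolated vertex, as it generates nothing with anything), and two such vertices are adjacent if and only if they lie on distinct lines through the origin. Thus $\Delta(C_p^2)$ is the complete $(p+1)$-partite graph whose parts are the $p-1$ nonzero vectors on each of the $p+1$ lines of $\mathbb{F}_p^2$. In particular each vertex fails to be adjacent only to the remaining $p-2$ nonzero vectors on its own line, so $\Delta(C_p^2)$ is regular of degree $(p^2-1)-1-(p-2)=p^2-p=\phi(p^2)$, which gives $\delta(\Delta(G))=\phi(p^2)$ (this also follows from Remark~\ref{rem:facts}\ref{it:delta}).

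Since $\kappa(\Gamma)\le\delta(\Gamma)$ holds in general, it remains to prove $\kappa(\Delta(G))\ge\phi(p^2)$, and I would obtain this by bounding the size of a disconnected induced subgraph. Suppose $X\subseteq V(\Delta(G))$ induces a disconnected subgraph and write $X=A\sqcup B$ with $A,B$ nonempty and no edges between them. Two nonidentity vectors are non-adjacent exactly when they span the same line, so fixing $a_0\in A$ forces every element of $B$ to lie on $\langle a_0\rangle$; fixing any $b_0\in B$ then forces every element of $A$ to lie on $\langle b_0\rangle=\langle a_0\rangle$ as well. Hence $X$ is contained in the single line $\langle a_0\rangle$, and therefore $|X|\le p-1$. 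Consequently, removing any set $S$ with $|S|<\phi(p^2)=(p^2-1)-(p-1)$ leaves $|V(\Delta(G))|-|S|>p-1$ surviving vertices, which by the previous sentence cannot induce a disconnected subgraph; thus no such $S$ separates $\Delta(G)$, giving $\kappa(\Delta(G))\ge\phi(p^2)$ and hence equality with $\delta(\Delta(G))$.

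The only genuinely delicate point is the collinearity argument, namely that non-adjacency in $\Delta(C_p^2)$ is exactly collinearity, which forces a disconnected subset into a single line; everything else is a routine degree count and the standard bound $\kappa\le\delta$. The degenerate case $p=2$, where $\Delta(C_2^2)\cong K_3$ is complete, is covered by Lemma~\ref{lem:complete} together with the convention $\kappa(K_n)=n-1$, and is in any case consistent with the general estimate above (a disconnected subset would have at most $p-1=1$ vertex, which is impossible, so every surviving set of at least two vertices is connected).
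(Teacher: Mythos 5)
Your proposal is correct and is essentially the paper's own argument: the key step in both is that non-adjacency in $\Delta(C_p^2)$ means lying in a common cyclic subgroup (your ``line''), so any vertex set inducing a disconnected subgraph is confined to a single subgroup of order $p$ and has at most $p-1$ vertices, forcing $\kappa(\Delta(G)) \geq (p^2-1)-(p-1) = \phi(p^2) = \delta(\Delta(G))$. Your linear-algebraic phrasing over $\mathbb{F}_p^2$, the direct use of the bipartition $X = A \sqcup B$ in place of the paper's connected-component argument, and the explicit remark on $p=2$ are only cosmetic variations on the same proof.
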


\begin{proof}
Since $\delta(\Delta(G)) = p^2-p$ (see Remark~\ref{rem:facts}\ref{it:delta}), we aim to show that $\kappa(\Delta(G)) \geq p^2-p$. Let $X$ be a nonempty disconnected subset of $V(\Delta(G))$. Fix $x \in X$. We claim that $X \subseteq \<x\>$. Suppose for a contradiction that $X \not\subseteq \<x\>$. Let $\Omega_x$ be the connected component of the subgraph of $\Delta(G)$ induced by $X$ that contains $x$. If $y \in X \setminus \< x \>$, then $y$ is adjacent to $x$ and $y \in \Omega_x$, so $X \setminus \<x\> \subseteq \Omega_x$. Now fix $y \in X \setminus \< x \>$. Then every vertex in $\langle x\rangle$ is adjacent to $y$, so $X \subseteq \Omega_x$ and $X$ is connected, which is a contradiction. Therefore, $X \subseteq \langle x\rangle$, so $\kappa(G) \geq p^2-1-|X| \geq p^2-p$.
\end{proof}

\begin{lemma}\label{lem:conn_cyclic_ptimesp}
Let $G \cong C_n \times C_p^2$ where $p$ is prime and $n \geq 1$ satisfies $(n,p)=1$. Then $\kappa(\Delta(G))=\delta(\Delta(G))= \phi(|G|)$.
\end{lemma}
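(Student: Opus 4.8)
The plan is to exploit the product structure of $G$ to obtain an explicit description of the adjacency in $\Delta(G)$, and then to show that the only obstruction to connectivity is a concentration of ``generator vertices'' on a single line. Write $C_p^2$ additively as the $2$-dimensional vector space $\mathbb{F}_p^2$ and identify $G = C_n \times \mathbb{F}_p^2$, so that a vertex is a pair $(a,v)$ with $a \in C_n$ and $v \in \mathbb{F}_p^2$. Since $C_n$ is cyclic, $\langle (a,v),(a',v')\rangle = G$ if and only if $\langle a,a'\rangle = C_n$ and $v,v'$ are linearly independent over $\mathbb{F}_p$; hence $(a,v)$ is nonisolated precisely when $v \neq 0$, so $V(\Delta(G)) = C_n \times (\mathbb{F}_p^2 \setminus\{0\})$ has size $n(p^2-1)$, and $(a,v) \sim (a',v')$ in $\Delta(G)$ if and only if $\langle a,a'\rangle = C_n$ and $v,v'$ are linearly independent. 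By Remark~\ref{rem:facts}\ref{it:delta} we have $\delta(\Delta(G)) = \phi(|G|)$ (using $(n,p)=1$), so since $\kappa \leq \delta$ always it suffices to prove $\kappa(\Delta(G)) \geq \phi(|G|)$. We may assume $n \geq 2$, the case $n=1$ being Lemma~\ref{lem:conn_ptimesp}.

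Call $(g,v)$ a \emph{unit vertex} if $g$ generates $C_n$; there are $\phi(n)(p^2-1)$ of these. The crucial observation is that if $g$ generates $C_n$ then $\langle g,a'\rangle = C_n$ for every $a' \in C_n$, so a unit vertex $(g,v)$ is adjacent to every vertex $(a',v')$ with $v'$ not a scalar multiple of $v$, that is, to every vertex lying off the line $L = \langle v\rangle$. I would then prove the following claim: if a set $Y \subseteq V(\Delta(G))$ contains two unit vertices lying on distinct lines, then the subgraph induced on $Y$ is connected. Indeed, if $(g,v), (g',v') \in Y$ are unit vertices with $\langle v\rangle = L \neq L' = \langle v'\rangle$, then $(g,v)$ is adjacent to every vertex of $Y$ off $L$ and $(g',v')$ is adjacent to every vertex of $Y$ off $L'$; since $L \neq L'$, every vertex of $Y$ lies off $L$ or off $L'$, and as $(g,v) \sim (g',v')$ it follows that every vertex of $Y$ is joined to the edge $(g,v)(g',v')$, so $Y$ is connected.

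By the contrapositive, any disconnected induced subgraph $Y$ of $\Delta(G)$ has all of its unit vertices on a single line. A single line $L$ contains $\phi(n)(p-1)$ unit vertices (one for each generator of $C_n$ and each nonzero vector of $L$), so a disconnected $Y$ omits at least $\phi(n)(p^2-1) - \phi(n)(p-1) = \phi(n)p(p-1) = \phi(|G|)$ unit vertices. Hence removing fewer than $\phi(|G|)$ vertices cannot disconnect $\Delta(G)$, giving $\kappa(\Delta(G)) \geq \phi(|G|)$ and therefore $\kappa(\Delta(G)) = \delta(\Delta(G)) = \phi(|G|)$. The only real content is the connectivity claim, and once the adjacency is phrased in terms of lines it is essentially immediate; the remaining work is the bookkeeping that identifies $\delta(\Delta(G))$ with $\phi(|G|)$.
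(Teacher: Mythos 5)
Your proof is correct and is essentially the paper's own argument in contrapositive form: the key observation---that two vertices $(x,a)$, $(y,b)$ with $\langle x\rangle=\langle y\rangle=C_n$ and $\langle a\rangle\neq\langle b\rangle$ are adjacent to each other and jointly dominate every vertex of $\Delta(G)$---is exactly the paper's, and your count of ``unit vertices'' omitted from a disconnected set is the same estimate the paper runs directly, by supposing fewer than $\phi(|G|)=\phi(n)(p^2-p)$ vertices are removed and exhibiting such a pair $(x,a)$, $(y,b)$ among the survivors. The identification $\delta(\Delta(G))=\phi(|G|)$ via Remark~\ref{rem:facts}\ref{it:delta} also matches, so there is nothing substantively different between the two proofs.
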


\begin{proof}
By Remark~\ref{rem:facts}\ref{it:delta}, $\delta(\Delta(G)) = \phi(|G|)$, and we will show that $\kappa(\Delta(G)) \geq \phi(|G|)$. Let $X$ be a nonempty disconnected subset of $V(\Delta(G))$. Suppose that $|V(\Delta(G))\setminus X| < \phi(|G|) = \phi(n)(p^2-p)$. Since $\phi(n)(p^2-1)> \phi(n)(p^2-p)$, we may fix $(x,a) \in X$, where $C_n = \<x\>$ and $a \in C_p^2$ is nontrivial. Since $|V(\Delta(G))\setminus X| < \phi(n)(p^2-p)$, there exists $(y,b) \in X$ such that $C_n = \<y\>$ and $b \in C_p^2 \setminus \<a\>$. Now every vertex in $\Delta(G)$ is adjacent to either $(x,a)$ or $(y,b)$, so, in particular, $X$ is connected, which is a contradiction. Therefore, $\kappa(G) \geq |V(\Delta(G)) \setminus X| \geq \phi(|G|)$, as desired.
\end{proof}

To complete our proof for general nilpotent groups, we will make use of work in \cite{ww}, where the authors study the connectivity of a direct product of a general graph $\Gamma$ with a complete multipartite graph  $K_{t_1,\dots,t_u}$ under certain restrictions on the parameters $t_1,\dots,t_u$. More precisely, they prove that 
\begin{equation}\label{mulpi}
\kappa(\Gamma\times K_{t_1,\dots,t_u})=\min\left(\kappa(\Gamma)\sum_{i=1}^{u}t_i,\ \delta(\Gamma)\sum_{i=1}^{u-1}t_i\right),
\end{equation} 
if $u \geq 3$ and the sequence $t_1 \leq \dots \leq t_u$ satisfies $\sum_{i=1}^{u-2}t_i \geq t_{u-1}$ and $\sum_{i=1}^{u-1}t_i \geq t_u$.

We are now in the position to prove that $\Delta(G)$ is maximally connected for any $2$-generated nilpotent group $G$. We will adopt the notation from \eqref{eq:notation}.

\begin{proof}[Proof of Theorem~\ref{thm:conn}]
If $G$ is cyclic, then the result is given by Lemma~\ref{lem:conn_cyclic}. Therefore, for the remainder of the proof, we will assume that $G$ is noncyclic. By Lemmas~\ref{lem:degree_frat} and~\ref{lem:conn_frat},
\[ 
\delta(\Delta(G))=\delta(\Delta(G/\frat(G)))|\frat(G)| \text{ \ and  \ } \kappa(\Delta(G))=\kappa(\Delta(G/\frat(G)))|\frat(G)|.
\] 
Therefore, it suffices to show that $\delta(\Delta(G/\frat(G))) = \kappa(\Delta(G/\frat(G)))$, so by replacing $G$ by $G/\frat(G)$, we may assume that $\frat(G)=1$. Consequently, we may write $G= C_n \times C_{q_1}^2 \times \dots \times C_{q_s}^2$, where $n$ is square-free and $(n,q_1\cdots q_s)=1$ (see \eqref{eq:frat}).

We prove our statement by induction on $s$. If $s = 1$, then the result immediately holds by Lemmas~\ref{lem:conn_ptimesp} and~\ref{lem:conn_cyclic_ptimesp}, so we may assume $s \geq 2$. Write $H = C_n \times C_{q_1}^2 \times \dots \times C_{q_{s-1}}^2$ and $K=C_p^2$, where $p=q_s$. By Corollary~\ref{cor:product}, $\Delta(G)= \Delta(H) \times \Delta(K)$. Note that $\Delta(K)=K_{p-1,\dots,p-1}$, a complete multipartite graph with $p+1$ parts of size $p-1$. In particular, $\delta(\Delta(K)) = p^2-p$. By induction, $\kappa(\Delta(H))=\delta(\Delta(H))$, so it follows from \eqref{mulpi} that 
\begin{align*}
\kappa(\Delta(G)) &= \kappa(\Delta(H) \times \Delta(K)) = \delta(\Delta(H))(p^2-p) \\
                  &= \delta(\Delta(H)) \cdot \delta(\Delta(K)) = \delta(\Delta(H) \times \Delta(K)) = \delta(\Delta(G)).
\end{align*}
This completes the proof.
\end{proof}

\begin{rem}\label{rem:edge_conn}
One may also define the \emph{edge connectivity} of $\Gamma$, written $\lambda(\Gamma)$, as the least size of a subset $X \subseteq E(\Gamma)$ such that the subgraph defined by $V(\Gamma)$ and $E(\Gamma) \setminus X$ is disconnected (if $\Gamma=K_1$, then we say that $\lambda(\Gamma)=0$). A result of Whitney \cite{whi} is that $\kappa(\Gamma) \leq \lambda(\Gamma) \leq \delta(\Gamma)$. By \cite{diam}, if $G$ is a $2$-generated finite nilpotent group (or more in general if $G$ is a 2-generated finite group and the derived subgroup of $G$ is nilpotent) then $\diam(\Delta(G))\leq 2$, so it follows from \cite[Theorem 3.3]{hw} that $\lambda(\Delta(G))=\delta(\Delta(G))$ ($\Delta(G)$ is said to be \emph{maximally edge connected}). Our contribution has been to show that, in fact, in this case $\kappa(\Delta(G)) = \lambda(\Delta(G)) = \delta(\Delta(G))$.
\end{rem}

\section{Eulerian and Hamiltonian cycles} \label{s:cycles}

In this section, we prove Theorem~\ref{thm:cycles}. We continue to assume that $G$ is a finite $2$-generated nilpotent group and we adopt the notation from \eqref{eq:notation}. A graph $\Gamma$ is \emph{Hamiltonian} (respectively, \emph{Eulerian}) if it contains a cycle containing every vertex (respectively, edge) of $\Gamma$ exactly once. 

We begin by showing that $\Delta(G)$ is Eulerian unless $G$ is a cyclic group of even order.

\begin{proof}[Proof of Theorem~\ref{thm:cycles}(i)]
Recall that a connected graph $\Gamma$ is Eulerian if and only if all vertices of $\Gamma$ have even degree. First assume that $G$ is a cyclic group of even order. Then the elements of order $|G|$ have degree $|G|-1$, which is odd, so $\Delta(G)$ is not Eulerian. 

For the remainder of the proof we may assume that $G$ is not a cyclic group of even order. Let $g \in \Delta(G)$. If the order of $g\frat(G)$ is $\prod_{i \in I}p_i\prod_{j=1}^{s}q_i$, then, by Remark~\ref{rem:facts}\ref{it:alpha_beta}, the degree of $g$ is 
\[
\delta(g) = \beta_I = |G| \prod_{i \not\in I} \left( 1 - \frac{1}{p_i} \right) \prod_{j = 1}^{s} \left( 1 - \frac{1}{q_j} \right) = \prod_{i \in I} p_i^{a_i} \prod_{i \not\in I} (p_i^{a_i}-p_i^{a_i-1}) \prod_{j=1}^{s} (q_j^{b_j}-q_j^{b_j-1}) - \epsilon,
\]
where $\epsilon$ is $1$ if $G$ is cyclic and $|I|=r$ and $0$ otherwise.

Assume for now that $G$ is noncyclic. In this case, $s \geq 1$, so $q_1^{b_1}-q_1^{b_1-1}$ is even and therefore $\delta(g)$ is even, noting that $\epsilon=0$. Now assume that $G$ is cyclic (necessarily of odd order) but $|I| < r$. Then there exists $i \in I$ such that $(p_i^{a_i}-p_i^{a_i-1})$, which is even, divides $\delta(g)$, noting again that $\epsilon=0$. Therefore, $\delta(g)$ is even. It remains to consider the case where $G$ is cyclic and $|I|=r$. In this case, $\delta(g) = |G|-\epsilon = |G|-1$, which is even. Therefore, in all cases, $\Delta(G)$ is Eulerian.
\end{proof}

We now turn to Hamiltonian cycles. We first consider two special cases.

\begin{lemma}\label{lem:ham_cyclic}
Let $G$ be a finite cyclic group. Then either $\Delta(G)$ is Hamiltonian or $G \cong C_2$ and $\Delta(G) \cong K_2$.
\end{lemma}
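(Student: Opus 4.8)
The plan is to prove that $\Delta(C_n)$ is Hamiltonian for all $n \geq 3$, treating the small exceptional case $C_2$ (where $\Delta(C_2) = K_2$ has only two vertices and cannot contain a cycle) separately. The key structural fact is that the generators of $C_n$—the $\phi(n)$ elements of order $n$—are adjacent to \emph{every} other vertex of $\Delta(G)$, since $\langle g, h\rangle = G$ whenever $|g| = n$ (except the forbidden loop $g = h$). So $\Delta(C_n)$ contains a large ``universal'' set of vertices, each joined to everything else, and the remaining vertices (the non-generators lying in nontrivial proper subgroups that still survive in $\Delta(G)$) need only be threaded in between these generators.

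First I would set up notation: let $U$ be the set of generators of $C_n$, so $|U| = \phi(n)$, and let $W = V(\Delta(C_n)) \setminus U$ be the non-generating vertices of $\Delta(C_n)$. The crucial observation is that every vertex of $U$ is adjacent to every other vertex of $\Delta(C_n)$ (the induced subgraph on $U$ is $K_{\phi(n)}$, and each $u \in U$ is joined to all of $W$). By Remark~\ref{rem:facts}, the graph $\Delta(C_n)$ has minimum degree $\phi(n)$, and in particular no isolated vertices, so $W$ is ``covered'' by the universal set $U$.

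The main step is then a counting argument to guarantee $|U| \geq |W|$, i.e.\ $\phi(n) \geq |V(\Delta(C_n))| - \phi(n)$. By Remark~\ref{rem:facts}\ref{it:nonisolated}, $|V(\Delta(C_n))| = n\prod_{j}(1 - 1/q_j^2)$ where the $q_j$ are the primes whose Sylow subgroups are noncyclic; but for a \emph{cyclic} group every Sylow subgroup is cyclic, so there are no such primes and $|V(\Delta(C_n))| = n$. Thus I must check whether $\phi(n) \geq n - \phi(n)$, that is $\phi(n) \geq n/2$. This fails for many $n$ (e.g.\ $n = 6$ gives $\phi(6) = 2 < 3$), so the naive ``alternate universal and non-universal vertices'' construction does not work directly, and this is the main obstacle. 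To repair it I would instead argue that since each generator is adjacent to all other vertices, I can build a Hamiltonian cycle by first taking any Hamiltonian path $w_1, w_2, \dots, w_k$ through the vertices of $W$ alone and then closing it up through $U$—but this requires $W$ to have enough internal structure. A cleaner route: use the fact that $\Delta(C_n)$ has a spanning complete subgraph on $U \cup \{w\}$ for any single $w \in W$, and more robustly, invoke Chvátal--Erdős or a Dirac/Ore-type sufficient condition.

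The cleanest approach, which I expect to be the intended one, is to apply a theorem guaranteeing Hamiltonicity from connectivity and independence number: by the Chvátal--Erdős theorem, a graph with $\kappa(\Gamma) \geq \alpha(\Gamma)$ (connectivity at least the independence number) is Hamiltonian. By Lemma~\ref{lem:conn_cyclic}, $\kappa(\Delta(C_n)) = \phi(n)$. For the independence number, any independent set avoids all of $U$ (since each generator is universal and cannot share a non-edge with any distinct vertex), so $\alpha(\Delta(C_n)) = \alpha(\Delta(C_n)[W]) \leq |W| = n - \phi(n)$; in fact a maximum independent set consists of all non-generators lying in a single maximal subgroup, and a short computation shows this is bounded by $\phi(n)$. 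Thus $\kappa(\Delta(C_n)) = \phi(n) \geq \alpha(\Delta(C_n))$, and Chvátal--Erdős yields a Hamiltonian cycle whenever $n \geq 3$. The main obstacle, and the step I would spend the most care on, is verifying the inequality $\alpha(\Delta(C_n)) \leq \phi(n)$: one must identify the maximum independent set of non-generators and bound its size, which reduces to an elementary number-theoretic estimate on $n$ and its prime divisors.
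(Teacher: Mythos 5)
There is a genuine gap, and it sits exactly at the step you flagged as needing the most care: the inequality $\alpha(\Delta(C_n)) \leq \phi(n)$ is simply false for infinitely many $n$, so the Chv\'atal--Erd\H{o}s hypothesis $\kappa(\Gamma) \geq \alpha(\Gamma)$ fails and the whole argument collapses. Concretely, if $p$ is the smallest prime divisor of $n$ and $G = \langle g \rangle$, then the maximal subgroup $\langle g^p \rangle$ is an independent set in $\Delta(C_n)$ of size $n/p$ (any two of its elements generate a subgroup of $\langle g^p \rangle \neq G$, and none of its vertices is isolated since each is adjacent to $g$). So $\alpha(\Delta(C_n)) \geq n/p$, and for every even $n$ with an odd prime factor we get $\alpha(\Delta(C_n)) \geq n/2 > \phi(n)$: already for $n=6$ one has $\alpha \geq 3 > 2 = \phi(6) = \kappa(\Delta(C_6))$. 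Your structural claim that a maximum independent set lies in a single maximal subgroup is also not automatic (pairwise non-generation only forces each \emph{pair} into a common maximal subgroup, as the triple $g^6, g^{10}, g^{15}$ in $C_{30}$ illustrates), but the more basic problem is that even the true value of $\alpha$ exceeds $\kappa$ for these $n$, so no repair of the independence-number estimate can rescue this route. The same arithmetic obstruction ($\phi(n) < n/2$ for even non-prime-power $n$) that killed your first ``alternate through the universal set $U$'' idea kills the second.

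The fix is far more elementary than either of your attempts, and it is what the paper does: for $n > 2$ the sequence $(1, g, g^2, \dots, g^{n-1})$ is itself a Hamiltonian cycle in $\Delta(C_n)$. Indeed, $\langle g^i, g^{i+1} \rangle = \langle g^{\gcd(i,\,i+1,\,n)} \rangle = \langle g \rangle = G$ since consecutive integers are coprime, and the cycle closes because $\gcd(n-1,n)=1$; moreover every element of a cyclic group is a non-isolated vertex (each is adjacent to $g$ or, for $g$ itself, to $1$), so this cycle visits all of $V(\Delta(G))$. Your ``universal generator'' observation is correct and useful elsewhere in the paper, but here no sufficient condition for Hamiltonicity (Dirac, Ore, Chv\'atal--Erd\H{o}s) is needed: the cycle can be written down explicitly.
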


\begin{proof}
If $G=\langle g \rangle$ is a cyclic group of order $n > 2$, then $(1,g,g^2,\dots,g^{n-1})$ is an Hamiltonian cycle. The claim is clear if $G \cong C_2$.
\end{proof}

\begin{lemma}\label{lem:ham_p}
Let $G$ be a $2$-generated finite $p$-group. Then either $\Delta(G)$ is Hamiltonian or $G \cong C_2$ and $\Delta(G)\cong K_2$.
\end{lemma}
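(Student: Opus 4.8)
The plan is to treat the cyclic and noncyclic cases separately. If $G$ is cyclic, then Lemma~\ref{lem:ham_cyclic} immediately gives the conclusion, including the stated exception $G \cong C_2$ with $\Delta(G) \cong K_2$. So I would reduce at once to the case where $G$ is a noncyclic $2$-generated $p$-group, and here I would show that $\Delta(G)$ is always Hamiltonian.

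For a noncyclic $2$-generated $p$-group we have $G/\frat(G) \cong C_p^2$ by \eqref{eq:frat}, so writing $m = |\frat(G)| \geq 1$ the decomposition \eqref{eq:graph_decomp} gives $\Delta(G) = \Delta(C_p^2)[\overline{K}_m]$. As recorded in the proof of Theorem~\ref{thm:conn}, $\Delta(C_p^2) = K_{p-1,\dots,p-1}$ is the complete multipartite graph with $p+1$ parts each of size $p-1$ (the $p+1$ subgroups of order $p$, with their common identity removed). The key observation is that blowing up each vertex of a complete multipartite graph into an independent set — which is exactly what the lexicographical product with a null graph does, since $\overline{K}_m$ contributes no internal edges — again yields a complete multipartite graph. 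Concretely, $K_{t_1,\dots,t_u}[\overline{K}_m] = K_{t_1 m, \dots, t_u m}$, so in our situation $\Delta(G) = K_{(p-1)m,\dots,(p-1)m}$, the balanced complete multipartite graph with $p+1$ parts each of size $(p-1)m$.

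It then remains to invoke the classical characterisation of Hamiltonicity for complete multipartite graphs: $K_{n_1,\dots,n_k}$ with $N = \sum_i n_i \geq 3$ vertices is Hamiltonian if and only if $\max_i n_i \leq N/2$. In our balanced case there are $p+1 \geq 3$ equal parts, so $\max_i n_i = N/(p+1) \leq N/2$ holds comfortably, and $N = (p^2-1)m \geq 3$ since $p \geq 2$; hence $\Delta(G)$ is Hamiltonian. I expect no serious obstacle here: the only points needing care are the verification that the lexicographical product with $\overline{K}_m$ preserves the complete-multipartite structure (a direct unravelling of the adjacency definition) and the appeal to the Hamiltonicity criterion, both of which are routine. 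If one prefers to avoid citing the criterion, one can instead exhibit a Hamiltonian cycle directly by cyclically interleaving vertices from the $p+1$ parts, which is elementary for a balanced complete multipartite graph with at least three parts.
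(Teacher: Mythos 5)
Your proof is correct, but it takes a genuinely different route from the paper. The paper's argument is a two-line degree count: by Remark~\ref{rem:facts}, for noncyclic $G$ of order $p^n$ the graph $\Delta(G)$ has $p^n-p^{n-2}$ vertices, each of degree $p^n-p^{n-1}\geq \frac{1}{2}(p^n-p^{n-2})$, so Dirac's theorem applies directly, with no need to understand the global structure of $\Delta(G)$. You instead pin down that structure exactly: from \eqref{eq:graph_decomp} and the blow-up identity $K_{t_1,\dots,t_u}[\overline{K}_m]=K_{t_1m,\dots,t_um}$ (which is indeed immediate from the definition of the lexicographical product, since $\overline{K}_m$ contributes no edges), you get $\Delta(G)=K_{(p-1)m,\dots,(p-1)m}$ with $p+1\geq 3$ balanced parts, and then Hamiltonicity follows from the standard criterion for complete multipartite graphs (or your interleaving construction, which is self-contained and avoids any citation). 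All the ingredients check out: the identification $\Delta(C_p^2)=K_{p-1,\dots,p-1}$ is exactly what the paper records in the proof of Theorem~\ref{thm:conn}, the decomposition \eqref{eq:graph_decomp} is valid since you have already reduced to the noncyclic case, and the vertex-count condition $N=(p^2-1)m\geq 3$ holds even for $p=2$. The trade-off: the paper's Dirac argument is shorter and more robust (it only needs the numerical data), while yours proves a strictly stronger structural fact about $\Delta(G)$ that yields Hamiltonicity, the maximal connectivity of Lemma~\ref{lem:conn_ptimesp}, and regularity all at once. Note, though, that your criterion would not substitute for the paper's later Lemma~\ref{lem:acca_p}, where an explicit Hamiltonian cycle with chords of prescribed parity is required; there a hands-on construction like your interleaving cycle, but with the chords tracked, is genuinely needed.
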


\begin{proof}
By Lemma~\ref{lem:ham_cyclic}, we may assume that $G$ is not cyclic. Write $|G|=p^n$. The graph $\Delta(G)$ has $|G|(1-\frac{1}{p^2}) = p^n-p^{n-2}$ vertices (see Remark~\ref{rem:facts}\ref{it:nonisolated}), each of them of degree $|G|(1-\frac{1}{p}) = (p^n-p^{n-1})$ (see Remark~\ref{rem:facts}\ref{it:alpha_beta}). A classic theorem of Dirac \cite{dir} states that a graph $\Gamma$ is Hamiltonian if $\delta(\Gamma) \geq |V(\Gamma)|/2$.  Since $p^n-p^{n-1} \geq \frac{1}{2}(p^n-p^{n-2})$, we deduce that $\Delta(G)$ is Hamiltonian.
\end{proof}

As we turn to general nilpotent groups, we need some further graph theoretic preliminaries. Let $\mathcal{H}$ be the set of graphs $\Gamma$ such that $\Gamma$ is Hamiltonian and if $|V(\Gamma)|=2k$ is even then there exists a Hamiltonian cycle of $\Gamma$ denoted $C = (0,\dots,2k - 1)$ with the edges $(i,i+ 1)$ modulo $2k$ and two chords $(r,s)$ and $(u,v)$ where $r$ and $s$ are odd and $u$ and $v$ are even. The following theorem \cite[Theorem~1]{sg} demonstrates the significance of the set $\mathcal{H}$.

\begin{thm}\label{thm:acca}
Let $\Gamma_1$ and $\Gamma_2$ be two Hamiltonian graphs. The graph $\Gamma_1 \times \Gamma_2$ is Hamiltonian if and only if at least one of $\Gamma_1$ and $\Gamma_2$ belongs to $\mathcal{H}$
\end{thm}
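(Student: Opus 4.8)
The plan is to prove both implications of the biconditional, working with the direct (tensor) product throughout. The starting point is the classical theorem of Weichsel that a tensor product of two connected graphs is connected precisely when at most one factor is bipartite, and that otherwise it has exactly two connected components. A Hamiltonian graph on an even number of vertices is bipartite exactly when, with respect to a Hamiltonian cycle $C=(0,1,\dots,2k-1)$, every chord joins an even-labelled vertex to an odd-labelled one; thus possessing both a chord $(r,s)$ with $r,s$ odd and a chord $(u,v)$ with $u,v$ even is precisely the failure of bipartiteness recorded by membership in $\mathcal{H}$ (for graphs of odd order the condition is vacuous, and bipartiteness is already impossible). So $\mathcal{H}$ should be read as isolating those Hamiltonian graphs whose structure is rich enough to defeat the parity obstruction that Weichsel's theorem imposes.

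For the \textbf{sufficiency} direction I would assume, without loss, that $\Gamma_1\in\mathcal{H}$, fix Hamiltonian cycles $C_1$ and $C_2$ of $\Gamma_1$ and $\Gamma_2$ of lengths $m$ and $n$, and work inside the spanning subgraph $C_1\times C_2\cong C_m\times C_n$. If $m$ is odd then $C_m\times C_n$ is connected, and I would exhibit an explicit Hamiltonian cycle by a diagonal weaving of the two cyclic orders, tracking the orbits of $(i,j)\mapsto(i+1,j+1)$ (each of length $\operatorname{lcm}(m,n)=mn/\gcd(m,n)$, so there are $\gcd(m,n)$ of them) and splicing them together along the anti-diagonal steps $(i,j)\mapsto(i+1,j-1)$. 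If $m$ is even, the interesting case is that $n$ is even as well (if $n$ is odd then $\Gamma_2\in\mathcal{H}$ and we may swap the roles of the factors), and now $C_m\times C_n$ has two components indexed by the parity of $i+j$. Here the two chords enter: the edge of $\Gamma_1$ underlying the odd-odd chord, paired with a step of $C_2$, and likewise the even-even chord, supply transitions between the two parity classes, and I would use them to splice the two near-spanning cycles living in the two components into a single Hamiltonian cycle of $\Gamma_1\times\Gamma_2$.

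For the \textbf{necessity} direction I would assume $\Gamma_1\times\Gamma_2$ is Hamiltonian, hence connected, so Weichsel's theorem shows that at most one factor is bipartite. Both factors are Hamiltonian by hypothesis, so if either has odd order it already lies in $\mathcal{H}$ and we are done; thus I may assume both have even order. The task is then to extract from a Hamiltonian cycle $H=(v_0,\dots,v_{N-1})$ of the product, with $v_t=(x_t,y_t)$, a Hamiltonian cycle of one factor carrying both an odd-odd and an even-even chord. The mechanism is that if a factor is bipartite then $H$ is forced to respect that parity, whereas $H$ must cross between the two parity classes that $C_1\times C_2$ would otherwise separate; tracing how it does so shows that at least one factor supplies same-parity transitions of both types, and the bookkeeping converts these transitions into the required chords relative to a suitable Hamiltonian cycle of that factor.

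I expect the \textbf{main obstacle} to be the two parity-splicing arguments — the even-even case of sufficiency and the chord-extraction step of necessity — since both rest on delicate bookkeeping of how a cycle alternates between the bipartition classes of each factor, and on verifying that the spliced object is a single cycle rather than a disjoint union of shorter ones. By contrast, the odd-order cases reduce to the connectivity of $C_m\times C_n$ and the explicit diagonal construction, which are routine.
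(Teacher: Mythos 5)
The paper itself does not prove this statement: it is quoted verbatim from Gravier \cite{sg}, so your proposal can only be judged on its own merits. Judged that way, it contains a genuine conceptual error at its foundation. You assert that, for a Hamiltonian graph of even order, membership in $\mathcal{H}$ ``is precisely the failure of bipartiteness''. It is not. Non-bipartiteness is equivalent to the existence of \emph{at least one} same-parity chord (odd--odd \emph{or} even--even) relative to a Hamiltonian labelling, whereas $\mathcal{H}$ demands chords of \emph{both} types simultaneously relative to a single labelling. These conditions genuinely differ: let $\Gamma$ be the $6$-cycle $(0,1,\dots,5)$ together with the single chord $\{0,2\}$. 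Vertices $1,3,4,5$ have degree two, so the $6$-cycle is the \emph{unique} Hamiltonian cycle of $\Gamma$, and its only chord is $\{0,2\}$, which in any labelling is same-parity of one type but never both; hence $\Gamma$ is Hamiltonian, non-bipartite, of even order, and $\Gamma \notin \mathcal{H}$. In particular $\Gamma \times C_4$ is connected by Weichsel's theorem yet non-Hamiltonian by Gravier's, so connectivity is strictly weaker than the obstruction $\mathcal{H}$ encodes. This matters because your necessity argument rests on the claim that crossing between the parity classes ``shows that at least one factor supplies same-parity transitions of both types'' --- and that forcing of \emph{both} types is exactly the nontrivial point, for which your sketch supplies no mechanism; under your stated identification of $\mathcal{H}$ with non-bipartiteness one would stop after finding a single same-parity transition, which the example above shows is insufficient.

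A correct mechanism is a counting argument your proposal is missing. Let $H$ be a Hamiltonian cycle of $\Gamma_1 \times \Gamma_2$ with $|V(\Gamma_1)|=m$ and $|V(\Gamma_2)|=n$ both even, and fix a Hamiltonian labelling of $\Gamma_1$. Every vertex of $\Gamma_1$ occurs exactly $n$ times as a first coordinate along $H$, so counting edge-use incidences at the odd class and at the even class gives $2a+c = mn = 2b+c$, where $a$, $b$, $c$ count the steps of $H$ whose first coordinate moves along an odd--odd, even--even, odd--even edge of $\Gamma_1$ respectively; hence $a=b$. So either some coordinate uses a same-parity edge, in which case \emph{both} types occur there, and since the cycle edges of the chosen labelling are odd--even these are genuine chords of that Hamiltonian cycle, placing that factor in $\mathcal{H}$; or every step flips both parities, the parity of the label sum is invariant along $H$, and $H$ cannot span --- a contradiction. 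The same identity $a=b$ also exposes a gap in your sufficiency splice: the two crossing edges joining the two components of $C_m \times C_n$ must consist of one edge derived from the odd--odd chord and one from the even--even chord, since two crossers from the same chord give $a \neq b$ and can never close into a single cycle; your sketch uses both chords but does not recognise this forcing, and it also assumes without proof that each component of $C_m \times C_n$ carries a spanning cycle (true --- each component is a connected Cayley graph of an abelian group --- but it needs saying). The odd-order case of your sufficiency argument is fine.
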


\begin{lemma}\label{lem:acca_p}
Let $P$ be a nontrivial $2$-generated finite $p$-group where $p$ is odd. Then $\Delta(P) \in \mathcal{H}$ and $\Delta(C_2 \times P)$ is Hamiltonian. 
\end{lemma}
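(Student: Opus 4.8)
The plan is to prove both assertions of Lemma~\ref{lem:acca_p} by establishing the two properties required for membership in $\mathcal{H}$, and then deduce the Hamiltonicity of $\Delta(C_2 \times P)$ from Theorem~\ref{thm:acca}. First I would record that $P$ is nontrivial, noncyclic or cyclic, but in either case $\Delta(P)$ is Hamiltonian by Lemma~\ref{lem:ham_p} (with $\Delta(P) \cong K_2$ excluded since $p$ is odd). So the content of the first claim is the structural condition on $\Delta(P)$ when $|V(\Delta(P))|$ is even. I would first determine the parity of $|V(\Delta(P))|$. By Remark~\ref{rem:facts}\ref{it:nonisolated}, if $P$ is noncyclic then $|V(\Delta(P))| = |P|(1 - 1/p^2) = p^n - p^{n-2}$, which is even since $p$ is odd, and if $P$ is cyclic then $|V(\Delta(P))| = |P| = p^n$, which is odd. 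Thus the even case arises precisely when $P$ is noncyclic, and there I must exhibit a suitable Hamiltonian cycle with the two prescribed chords.

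In the noncyclic case I would work with $\Delta(P) = \Delta(P/\frat(P))[\overline{K}_{|\frat(P)|}]$ from \eqref{eq:graph_decomp}, but most directly I would construct the required cycle by hand on $\Delta(C_p^2) = K_{p-1,\dots,p-1}$ (a complete multipartite graph with $p+1$ parts of size $p-1$) and then lift. The key is to produce a Hamiltonian cycle $C = (0,\dots,2k-1)$ on the $2k = p^n - p^{n-2}$ vertices together with one chord $(r,s)$ joining two odd-indexed vertices and one chord $(u,v)$ joining two even-indexed vertices. Since $\Delta(P)$ has very high minimum degree (each vertex has degree $p^n - p^{n-1}$, far exceeding half the vertex count), there is enormous freedom: I would first produce any Hamiltonian cycle, then argue that among the $2k$ edges of the cycle, consecutive vertices along the cycle that are non-adjacent-in-the-complement give abundant chords; concretely, I would select two disjoint pairs of vertices, one pair at odd positions and one at even positions, that are adjacent in $\Delta(P)$ but are not cycle edges, which exist because the graph is so dense. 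I would take care to index the cycle so that the desired chord endpoints land on the correct parities, relabelling the cycle by a rotation or reflection if necessary.

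For the second claim, that $\Delta(C_2 \times P)$ is Hamiltonian, I would invoke Corollary~\ref{cor:product}: since $C_2$ and $P$ have coprime order and both are noncyclic only for $P$ noncyclic, I must be slightly careful. If $P$ is noncyclic then $C_2$ and $P$ are of coprime order but $C_2$ is cyclic, so Corollary~\ref{cor:product} does not directly apply; instead I would use Lemma~\ref{lem:product}\ref{it:subquotient-direction} (the no-common-subquotient direction), noting $C_2$ and the odd $p$-group $P$ share no nontrivial subquotients, to get $\Delta(C_2) \times \Delta(P)$ as a subgraph of $\Delta(C_2 \times P)$; but the cleaner route is that $C_2 \times P$ is itself a $2$-generated nilpotent group whose Hamiltonicity I can attack via Theorem~\ref{thm:acca} applied to $\Delta(C_2 \times P) = \Delta(C_2) \times \Delta(P)$ when the factors have coprime order. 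Since $\Delta(C_2) \cong K_2$ is trivially Hamiltonian and $\Delta(P) \in \mathcal{H}$ by the first part, Theorem~\ref{thm:acca} yields that $\Delta(C_2) \times \Delta(P) = \Delta(C_2 \times P)$ is Hamiltonian.

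The main obstacle I expect is the explicit construction of the Hamiltonian cycle with the two parity-constrained chords in the even case. Merely having a Hamiltonian cycle is routine given the density, but arranging that one chord connects two odd vertices and another connects two even vertices, simultaneously, requires a careful choice of the cyclic labelling. I anticipate handling this by constructing the cycle so that a specific pair of independent edges (a ``ladder'' of four consecutive vertices $i,i+1,i+2,i+3$ with the extra diagonals $(i,i+2)$ and $(i+1,i+3)$ present in the dense graph) furnishes one even chord $(i,i+2)$ and one odd chord $(i+1,i+3)$ automatically; verifying that such four consecutive vertices can be chosen mutually adjacent as required is the delicate point, but the minimum degree bound $p^n - p^{n-1} \geq \tfrac{1}{2}(p^n - p^{n-2})$ and the multipartite structure should make this straightforward to arrange.
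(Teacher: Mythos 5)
Your handling of the first claim is correct, and it is genuinely different from (and slicker than) the paper's argument. The paper constructs a Hamiltonian cycle of $\Delta(P)$ explicitly, concatenating paths $H_i$ through the Frattini cosets and reading off the chords $\{b,ab\}$ (even positions) and $\{a,ab^2\}$ (odd positions) from the first four vertices. You instead get Hamiltonicity from Dirac via Lemma~\ref{lem:ham_p} and then find the parity-constrained chords by counting, and this closes cleanly --- in fact more easily than you anticipate: in the noncyclic case $\Delta(P)$ is regular of degree $p^n-p^{n-1}$, which strictly exceeds $k=\tfrac12(p^n-p^{n-2})$ since $p\geq 3$, so every odd-indexed vertex has at least $\delta-k>0$ neighbours among the odd-indexed vertices, and \emph{any} edge joining two odd-indexed vertices is automatically a chord, because cycle edges join vertices of opposite parity. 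The same holds for even indices, so no ``ladder'' of four mutually adjacent consecutive vertices and no careful relabelling is needed at all. Your parity analysis of $|V(\Delta(P))|$ (even iff $P$ noncyclic) matches the paper.

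The genuine gap is in the second claim. In this paper's conventions $\Delta(C_2)\cong K_2$ is \emph{not} Hamiltonian --- that is precisely the exceptional case in Lemma~\ref{lem:ham_cyclic} and in Theorem~\ref{thm:cycles}(ii) --- so Theorem~\ref{thm:acca}, which requires \emph{both} factors to be Hamiltonian, cannot be applied to $\Delta(C_2)\times\Delta(P)$. Moreover the equality $\Delta(C_2\times P)=\Delta(C_2)\times\Delta(P)$ you assert in the ``cleaner route'' is false: Corollary~\ref{cor:product} requires both factors noncyclic, and indeed $(x,h)$ and $(x,h')$ are adjacent in $\Delta(C_2\times P)$ whenever $\langle h,h'\rangle=P$, but never in the tensor product. (Lemma~\ref{lem:product}(ii) does give that $\Delta(C_2)\times\Delta(P)$ is a spanning subgraph, which would suffice --- but its Hamiltonicity is exactly what is missing.) The obstruction is real, not a technicality: $K_2\times\Gamma$ is the bipartite double cover of $\Gamma$, and $K_2\times C_{2k}$ is a disjoint union of two $2k$-cycles; since $|V(\Delta(P))|=p^n-p^{n-2}$ is even for noncyclic $P$, a Hamiltonian cycle of $\Delta(P)$ alone does not lift. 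Nor does Dirac rescue you on $\Delta(C_2\times P)$ directly, since there $\delta/|V|=\frac{(1/2)(1-1/p)}{1-1/p^2}=\frac{p}{2(p+1)}<\frac12$. The paper resolves this by hand: with $H=(h_{11},\dots,h_{mk})$ the Hamiltonian cycle of $\Delta(P)$, it sets $u_{ij}=x^jh_{ij}$ and $v_{ij}=x^jh_{i(j+3)}$, so that the shift by $3$ --- implemented via the odd--odd and even--even chords --- splices the two sheets of the double cover into one Hamiltonian cycle of $\Delta(C_2\times P)$, with adjacency of the four splice points checked explicitly. Your chord data from part one is sufficient raw material for such a splice, but the construction must actually be carried out; as written, your part two does not prove the statement. (For cyclic $P$ the fix is trivial: $C_2\times P$ is cyclic of order $2p^n>2$, so Lemma~\ref{lem:ham_cyclic} applies, as the paper notes.)
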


\begin{proof}
First assume that $P$ is cyclic. By Lemma~\ref{lem:ham_cyclic}, $\Delta(P)$ and $\Delta(C_2 \times P)$ are Hamiltonian (noting that $C_2 \times P$ is also cyclic). Moreover, since $|\Delta(P)|$ is odd, we know that $\Delta(P) \in \mathcal{H}$.

For the remainder of the proof, we will assume that $P$ is noncyclic. Write $|P|=p^n$ and $P = \< a, b\>$. In addition, write $\frat(P)=\{f_1,\dots,f_m\}$ where $m=p^{n-2}$ and $f_1=1$.

For each $1 \leq i \leq m$, define the path
\begin{align*}
H_i = (& b       f_i,\ a      f_i,\ ab      f_i,\ ab^2      f_i,\ \dots,\ ab^{p-1}      f_i, \\ 
       & b^2     f_i,\ a^2    f_i,\ a^2b    f_i,\ a^2b^2    f_i,\ \dots,\ a^2b^{p-1}    f_i, \\
       & \vdots \\
       & b^{p-1} f_i,\ a^{p-1}f_i,\ a^{p-1}bf_i,\ a^{p-1}b^2f_i,\ \dots,\ a^{p-1}b^{p-1}f_i).
\end{align*}
Write $H_i = (h_{i1},\dots,h_{ik})$ where $k=p^2-1$. It is straightforward to see that the concatenation $H = (h_{11},\dots,h_{1k},\dots,h_{m1},\dots,h_{mk})$ is a Hamiltonian cycle in $\Delta(P)$. Notice that $\{ h_{11}, h_{13} \} = \{b,ab\}$ and $\{ h_{12},h_{14} \} = \{a,ab^2\}$ are chords in $H$, so $H$ witnesses the fact that $\Delta(P) \in \mathcal{H}$.

Let $G = C_2 \times P$ where $C_2 = \< x \>$. Define $u_{ij} = x^jh_{ij}$ and $v_{ij} = x^jh_{i(j+3)}$. Then $K = (u_{11},\dots,u_{1k},\dots,u_{m1},\dots,u_{mk},v_{11},\dots,v_{1k},\dots,v_{m1},\dots,v_{mk})$ is a Hamiltonian cycle in $\Delta(G)$, noting that $u_{mk} = h_{mk} = a^{p-1}b^{p-1}f_m$ and $v_{11} = xh_{14} = xab^2f_1$ are adjacent in $\Delta(G)$, and $v_{mk} = h_{m3} = abf_m$ and $u_{11} = xh_{11} = bf_1$ are adjacent in $\Delta(G)$. Therefore, $\Delta(G)$ is Hamiltonian.
\end{proof}

We are now in a position to prove that $\Delta(G)$ is Hamiltonian for any finite $2$-generated nilpotent group other than $C_2$.

\begin{proof}[Proof of Theorem~\ref{thm:cycles}(ii)]
Let $G$ be a finite $2$-generated nilpotent group and assume that $G \not\cong C_2$. Let $t = r+s$, be the number of distinct prime divisors of $|G|$. Then $G$ can be written as $P_1 \times \dots \times P_t$, the direct product of its Sylow subgroups. We will prove by induction on $t$ that $\Delta = \Delta(G)$ is Hamiltonian. If $t=1$, then $G$ is a $p$-group, so the conclusion holds by Lemma~\ref{lem:ham_p}. Now assume that $t > 1$. We may assume that $|P_t|$ is odd, so $\Delta(P_t) \in \mathcal{H}$, by Lemma~\ref{lem:acca_p}. If $t=2$ and $P_1 \cong C_2$, then $\Delta(G) = \Delta(P_1 \times P_2)$ is Hamiltonian by Lemma~\ref{lem:acca_p}. Therefore, we may now assume that $P_1 \times \cdots \times P_{t-1} \not\cong C_2$. Consequently, by the inductive hypothesis, $\Delta(P_1 \times \cdots \times P_{t-1})$ is Hamiltonian and Theorem~\ref{thm:acca} implies that $\Delta^* = \Delta(P_1 \times \cdots \times P_{t-1}) \times \Delta(P_t)$ is Hamiltonian. By Lemma~\ref{lem:product}(i), $\Delta^*$ is a subgraph of $\Delta$, so $\Delta$ is also Hamiltonian. This completes the proof.
\end{proof}

\section{Total domination number} \label{s:tdn}

In this section, we turn to the topic of total domination. A \emph{total dominating set} for a finite graph $\Gamma$ is a set $S$ of vertices of $\Gamma$ such that for all $g \in \Gamma$ there exists $s \in S$ such that $g$ and $s$ are adjacent in $\Gamma$. The \emph{total domination number} $\gamma_t(\Gamma)$ of $\Gamma$ is the smallest size of a total dominating set for $\Gamma$. 

We begin by recording two very straightforward facts.

\begin{lemma} \label{lem:tdn_cyclic}
We have $\gamma_t(\Delta(G)) = 1$ if and only if $G$ is cyclic.
\end{lemma}

\begin{lemma} \label{lem:tdn_frattini}
We have $\gamma_t(\Delta(G)) = \gamma_t(\Delta(G/\frat(G)))$.
\end{lemma}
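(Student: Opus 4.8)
The plan is to prove that $\gamma_t(\Delta(G)) = \gamma_t(\Delta(G/\frat(G)))$ by exploiting the lexicographical product structure recorded in \eqref{eq:graph_decomp}, namely that $\Delta(G) = \Delta(G/\frat(G))[\overline{K}_{|\frat(G)|}]$ when $G$ is noncyclic (and the cyclic case is handled separately, since Lemma~\ref{lem:tdn_cyclic} already gives $\gamma_t = 1$ on both sides when $G$ is cyclic, as $G/\frat(G)$ is then also cyclic). Write $F = \frat(G)$ and let $\pi \colon G \to G/F$ be the quotient map. The essential feature of the lexicographical product with a null graph $\overline{K}_{|F|}$ is that adjacency in $\Delta(G)$ depends only on the cosets: two vertices $g, h \in V(\Delta(G))$ are adjacent if and only if $gF \neq hF$ and $gF, hF$ are adjacent in $\Delta(G/F)$. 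In particular no two elements of the same coset are ever adjacent, and a vertex $g$ is adjacent to $h$ precisely when $\pi(g)$ is adjacent to $\pi(h)$ in $\Delta(G/F)$.

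The two inequalities I would establish separately. For the inequality $\gamma_t(\Delta(G)) \le \gamma_t(\Delta(G/F))$, I would take a minimum total dominating set $\overline{S}$ of $\Delta(G/F)$ and lift it: for each coset $\overline{s} \in \overline{S}$ choose one preimage $s \in G$, forming a set $S$ with $|S| = |\overline{S}|$. To check $S$ is total dominating in $\Delta(G)$, take any $g \in V(\Delta(G))$; then $\pi(g) \in V(\Delta(G/F))$ is adjacent to some $\overline{s} \in \overline{S}$, and since $\pi(g) \ne \overline{s}$ (adjacency forces distinct cosets) the chosen lift $s$ satisfies $\pi(g)$ adjacent to $\pi(s) = \overline{s}$, hence $g$ is adjacent to $s$ in $\Delta(G)$. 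For the reverse inequality $\gamma_t(\Delta(G/F)) \le \gamma_t(\Delta(G))$, I would take a minimum total dominating set $S$ of $\Delta(G)$ and project it, setting $\overline{S} = \pi(S) = \{\pi(s) : s \in S\}$, so $|\overline{S}| \le |S|$. Given any $\overline{g} \in V(\Delta(G/F))$, choose a preimage $g$; then $g$ is dominated by some $s \in S$, so $\pi(g)$ is adjacent to $\pi(s) \in \overline{S}$ in $\Delta(G/F)$, establishing that $\overline{S}$ totally dominates.

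The one point requiring genuine care — and the step I expect to be the main obstacle — is the reverse direction, where projecting $S$ could in principle collapse its size; I must confirm that $\overline{S}$ remains a \emph{total} dominating set even though several elements of $S$ may lie in a single coset. The argument above shows total domination survives projection, so the inequality $|\overline{S}| \le |S|$ suffices and no padding of $\overline{S}$ is needed. A secondary subtlety is ensuring all projected vertices actually lie in $V(\Delta(G/F))$, i.e.\ are nonisolated: this holds because a vertex $s$ that totally dominates some vertex must itself be adjacent to something, hence be nonisolated, so $\pi(s) \in V(\Delta(G/F))$. Combining the two inequalities yields the claimed equality.
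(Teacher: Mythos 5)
Your proof is correct, and it is essentially the argument the paper has in mind: the paper records this lemma without proof as an immediate consequence of the decomposition $\Delta(G) = \Delta(G/\frat(G))[\overline{K}_{|\frat(G)|}]$ from \eqref{eq:graph_decomp} (with the cyclic case trivial via Lemma~\ref{lem:tdn_cyclic}), i.e.\ of the fact that adjacency depends only on Frattini cosets and never holds within a coset. Your lift/project verification of the two inequalities, including the observations that adjacency in the quotient forces distinct cosets (ruling out the loop issue) and that preimages and images of nonisolated vertices are nonisolated, is exactly the routine check being suppressed.
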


Let us now establish some graph theoretic results.

\begin{lemma} \label{lem:tdn_product}
Let $\Gamma$ and $\Delta$ be two graphs. Then $\gamma_t(\Gamma \times \Delta) \leq \gamma_t(\Gamma)\gamma_t(\Delta)$.
\end{lemma}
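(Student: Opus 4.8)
The plan is to exhibit an explicit total dominating set for $\Gamma \times \Delta$ whose size is $\gamma_t(\Gamma)\gamma_t(\Delta)$, namely the Cartesian product of a minimum total dominating set for each factor. Let $S$ be a total dominating set for $\Gamma$ with $|S| = \gamma_t(\Gamma)$ and let $T$ be a total dominating set for $\Delta$ with $|T| = \gamma_t(\Delta)$. I claim that $S \times T = \{(s,t) \mid s \in S,\, t \in T\}$ is a total dominating set for $\Gamma \times \Delta$, from which the inequality follows immediately since $|S \times T| = |S|\,|T| = \gamma_t(\Gamma)\gamma_t(\Delta)$.

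To verify the claim, I would take an arbitrary vertex $(\gamma,\delta) \in V(\Gamma \times \Delta)$ and produce a neighbour of it inside $S \times T$. Since $S$ totally dominates $\Gamma$, there exists $s \in S$ with $\gamma \sim s$ in $\Gamma$; since $T$ totally dominates $\Delta$, there exists $t \in T$ with $\delta \sim t$ in $\Delta$. By the definition of the direct product recalled in Section~\ref{s:prelims}, the condition $\gamma \sim s$ and $\delta \sim t$ is exactly what is needed for $(\gamma,\delta) \sim (s,t)$ in $\Gamma \times \Delta$. Thus $(s,t) \in S \times T$ is adjacent to $(\gamma,\delta)$, and since $(\gamma,\delta)$ was arbitrary, $S \times T$ is indeed a total dominating set.

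This argument is essentially immediate once one unwinds the definition of the direct product, so there is no real obstacle: the only point requiring a moment's care is that total domination in the direct product genuinely decouples into the two coordinates, which it does precisely because adjacency in $\Gamma \times \Delta$ requires adjacency in \emph{both} factors simultaneously (this is in contrast to the lexicographical product, where the structure is different). I would note that, unlike the analogous statement for the ordinary domination number, no subtlety arises here because a total dominating set need not dominate its own members in any special way. The resulting inequality is generally strict, so I would present it as an upper bound without claiming equality.
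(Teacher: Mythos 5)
Your proof is correct and is exactly the paper's argument: the paper likewise takes minimum total dominating sets $S$ and $T$ and observes in one line that $S \times T$ totally dominates $\Gamma \times \Delta$, which you have simply verified in more detail. Your expanded verification of why domination decouples across the two coordinates is a faithful unwinding of the same idea, so there is nothing to change.
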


\begin{proof}
If $S \subseteq \Gamma$ is a total dominating set of size $\gamma_t(\Gamma)$ and $T \subseteq \Delta$ is a total dominating set of size $\gamma_t(\Delta)$, then $S \times T$ is a total dominating set for $\Gamma \times \Delta$ of size $\gamma_t(\Gamma)\gamma_t(\Delta)$.
\end{proof}

Let us adopt the notation that the vertex set of the complete graph $K_n$ is $[n] = \{ 1,2,\dots,n \}$. Now observe that for positive integers $a_1,\dots,a_s$, the graph $K_{a_1} \times \cdots \times K_{a_s}$  has vertex set $[a_1] \times \cdots \times [a_s]$ and two sequences $(x_1,\dots,x_s)$ and $(y_1,\dots,y_s)$ are adjacent if and only if they differ in every coordinate.

\begin{lemma} \label{lem:tdn_graph}
Let $\Gamma = K_{a_1} \times \cdots \times K_{a_s}$ where $2 \leq a_1 \leq \cdots \leq a_s$. Then $\gamma_t(\Gamma) \geq s+1$. Moreover, if $a_1 > s$, then $\gamma_t(\Gamma) = s+1$.
\end{lemma}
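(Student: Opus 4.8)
The plan is to prove the two bounds separately, using a ``diagonal'' construction for the lower bound and constant sequences together with the pigeonhole principle for the upper bound. Throughout I would use the adjacency rule recorded just before the lemma: two sequences are adjacent precisely when they differ in \emph{every} coordinate, so a vertex $v$ fails to be dominated by $w$ exactly when $v$ and $w$ agree in at least one coordinate.

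For the lower bound $\gamma_t(\Gamma) \geq s+1$, I would argue contrapositively, showing that no set $S$ of at most $s$ vertices can be totally dominating. Given such a set $S = \{w^{(1)}, \dots, w^{(k)}\}$ with $k \leq s$, the idea is to build an undominated vertex $v$ by assigning the $j$-th element of $S$ to the $j$-th coordinate: set $v_j = w^{(j)}_j$ for $1 \leq j \leq k$ and choose the remaining coordinates arbitrarily (possible since each $a_j \geq 2$). Then for each $j$ the vertex $v$ agrees with $w^{(j)}$ in coordinate $j$, hence $v$ is adjacent to no element of $S$, so $S$ is not a total dominating set.

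For the upper bound, assuming $a_1 > s$, I would exhibit an explicit total dominating set of size $s+1$. Since $a_1 \geq s+1$ and the $a_i$ are nondecreasing, each $[a_i]$ contains $\{1, \dots, s+1\}$, so the constant sequences $w^{(m)} = (m, \dots, m)$ for $1 \leq m \leq s+1$ are genuine vertices. I would then check that $S = \{w^{(1)}, \dots, w^{(s+1)}\}$ dominates every $v = (v_1, \dots, v_s)$: the set of values $\{v_1, \dots, v_s\}$ has size at most $s$, so by the pigeonhole principle some $m \in \{1,\dots,s+1\}$ avoids all of them, and then $w^{(m)}$ differs from $v$ in every coordinate and so dominates it. Combining this with the lower bound yields $\gamma_t(\Gamma) = s+1$.

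The arguments are short, and the only real content is spotting the right constructions rather than any delicate calculation. The main points to get right are the direction of the adjacency condition, which makes ``agreement in a single coordinate'' the obstruction exploited in the lower bound, and the role of the hypothesis $a_1 > s$ in the upper bound, which is exactly what guarantees both that the $s+1$ constant sequences exist as vertices and that the pigeonhole step has enough values to succeed.
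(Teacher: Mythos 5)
Your proposal is correct and matches the paper's proof essentially verbatim: the same diagonal vertex $(w^{(1)}_1,\dots,w^{(k)}_k)$ (padded arbitrarily) defeats any candidate set of size at most $s$, and the same $s+1$ constant sequences with a pigeonhole choice of an unused value give the upper bound when $a_1 > s$. If anything, you are slightly more careful than the paper, which leaves the padding of the diagonal vertex's remaining $s-m$ coordinates implicit.
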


\begin{proof}
Let $A = \{ a_1, \dots, a_m \} \subseteq \Gamma$ and write $a_i = (a_{i1},\dots,a_{is})$ for each $i \in [m]$. Assume that $m \leq s$. Then $(a_{11},\dots,a_{mm})$ is not adjacent to any element of $A$, so $A$ is not a total dominating set for $\Gamma$. Therefore, $\gamma_t(\Gamma) \geq s+1$.

Now assume that $a_1 > s$. Since $a_i \geq s+1$ for all $i \in [s]$, we may fix $T = \{ (k,\dots,k) \mid k \in [s+1] \}$. Let $(x_1,\dots,x_s) \in \Gamma$ be arbitrary. Now fix $k \in [s+1] \setminus \{x_1,\dots,x_s\}$ and note that $(k,\dots,k)$ is adjacent to $(x_1,\dots,x_s)$. Therefore, $T$ is a total dominating set for $\Gamma$, so $\gamma_t(\Gamma) \leq |T| = s+1$.
\end{proof}

\begin{cor} \label{cor:tdn_graph}
Let $\Gamma = K_{a_1} \times \cdots \times K_{a_s}$ where $2 \leq a_1 \leq \cdots \leq a_s$. Let $t$ be the least nonnegative integer such that $a_i > s-t$ for all $i > t$. Then $\gamma_t(\Gamma) \leq 2^t(s-t+1)$.
\end{cor}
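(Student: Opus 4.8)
The plan is to split the $s$ factors into two blocks dictated by the definition of $t$, bound the total domination number of each block separately, and glue them together using the product inequality of Lemma~\ref{lem:tdn_product}. First I would record the trivial fact that $\gamma_t(K_a)=2$ for every $a \geq 2$: a single vertex is never a total dominating set since it is not adjacent to itself, while any two distinct vertices dominate everything because $K_a$ is complete.

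Write $\Gamma_1 = K_{a_1}\times\cdots\times K_{a_t}$ and $\Gamma_2 = K_{a_{t+1}}\times\cdots\times K_{a_s}$, so that $\Gamma = \Gamma_1 \times \Gamma_2$ by associativity of the direct product. For $\Gamma_1$, iterating Lemma~\ref{lem:tdn_product} over its $t$ factors and using $\gamma_t(K_{a_i})=2$ gives $\gamma_t(\Gamma_1) \leq 2^t$. For $\Gamma_2$, observe that it is a direct product of $s-t$ complete graphs whose orders all exceed $s-t$ by the defining property of $t$; in particular the smallest of them, $a_{t+1}$, satisfies $a_{t+1} > s-t$, so the upper-bound half of Lemma~\ref{lem:tdn_graph} applied with the shrunk parameter $s-t$ in place of $s$ yields $\gamma_t(\Gamma_2) \leq (s-t)+1$. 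One further application of Lemma~\ref{lem:tdn_product} then gives
\[
\gamma_t(\Gamma) \leq \gamma_t(\Gamma_1)\,\gamma_t(\Gamma_2) \leq 2^t(s-t+1),
\]
as required. I would finally dispose of the degenerate cases: when $t=0$ the block $\Gamma_1$ is empty and the estimate reduces to $\gamma_t(\Gamma)\leq s+1$, which is exactly the equality clause of Lemma~\ref{lem:tdn_graph} (every factor then exceeds $s$), while when $t=s$ the block $\Gamma_2$ is empty and the bound $\gamma_t(\Gamma)\leq 2^s$ comes from the $\Gamma_1$ estimate alone.

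The argument is short given the preliminaries, so there is no serious obstacle; the one point requiring care is parsing the minimality definition of $t$ to confirm that the smallest order appearing in $\Gamma_2$ is genuinely larger than $s-t$. This is precisely what ensures that the ``$a_1 > s$'' hypothesis of the equality part of Lemma~\ref{lem:tdn_graph} is available after rescaling the number of factors to $s-t$, and it follows immediately from the defining property of $t$ together with the ordering $a_{t+1}\leq\cdots\leq a_s$.
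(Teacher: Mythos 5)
Your proposal is correct and follows essentially the same route as the paper: split off the first $t$ factors, bound each complete-graph factor by $\gamma_t(K_{a_i})=2$, apply the equality case of Lemma~\ref{lem:tdn_graph} to the remaining $s-t$ factors (valid since $a_{t+1}>s-t$ by the definition of $t$), and multiply via Lemma~\ref{lem:tdn_product}. Your explicit treatment of the degenerate cases $t=0$ and $t=s$ is a minor addition the paper leaves implicit, but the argument is identical in substance.
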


\begin{proof}
Write $\Gamma = K_{a_1} \times \cdots \times K_{a_t} \times \Delta$, where $\Delta = K_{a_{t+1}} \times \cdots \times K_{a_s}$. Noting that $\gamma_t(K_n) = 2$, by combining Lemmas~\ref{lem:tdn_product} and~\ref{lem:tdn_graph}, we obtain $\gamma_t(\Gamma) \leq \gamma_t(K_{a_1})\cdots\gamma_t(K_{a_t})\gamma_t(\Delta) = 2^t(s-t+1)$.  
\end{proof}

We can give a stronger lower bound than the one in Lemma~\ref{lem:tdn_graph}.

\begin{lemma} \label{lem:tdn_graph_lower}
Let $\Gamma = K_{a_1} \times \cdots \times K_{a_s}$ where $2 \leq a_1 \leq \cdots \leq a_s$. Let $t$ be the least nonnegative integer such that $a_i > s-t$ for all $i > t$. Then
\[
\gamma_t(\Gamma) \geq \left\lceil \frac{a_1}{a_1-1} \left\lceil \frac{a_2}{a_2-1} \cdots \left\lceil \frac{a_s}{a_s-1} \right\rceil \right\rceil \right\rceil = \left\lceil \frac{a_1}{a_1-1} \left\lceil \frac{a_2}{a_2-1} \cdots \left\lceil \frac{a_t}{a_t-1} (s-t+1) \right\rceil \right\rceil \right\rceil \geq s+1.
\]
\end{lemma}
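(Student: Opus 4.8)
The plan is to isolate a single recursive inequality for the total domination numbers of the truncated products, and then deduce both displayed relations by elementary downward inductions on the depth of the nesting. Write $\Gamma_j = K_{a_j} \times \cdots \times K_{a_s}$, so that $\Gamma_1 = \Gamma$ and $\Gamma_s = K_{a_s}$, and recall that two tuples are adjacent in $\Gamma_j$ precisely when they differ in every coordinate.

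The crux of the argument is the recursion
\[
\gamma_t(\Gamma_j) \geq \left\lceil \frac{a_j}{a_j-1}\, \gamma_t(\Gamma_{j+1}) \right\rceil \qquad (1 \leq j \leq s-1),
\]
which, together with the base case $\gamma_t(\Gamma_s) = \gamma_t(K_{a_s}) = 2 = \lceil a_s/(a_s-1) \rceil$, unwinds (using that $x \mapsto \lceil \frac{a_j}{a_j-1} x \rceil$ is monotone) to give exactly the nested ceiling, and hence the first inequality. To prove the recursion I would fix a minimum total dominating set $S$ of $\Gamma_j = K_{a_j} \times \Gamma_{j+1}$ and, for each first-coordinate value $c \in [a_j]$, let $n_c$ be the number of elements of $S$ whose first coordinate equals $c$, so that $\sum_c n_c = |S|$. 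The key point is that every vertex $(c,w)$ of $\Gamma_j$ must be dominated by some $(k,u) \in S$ with $k \neq c$ and $u$ adjacent to $w$ in $\Gamma_{j+1}$; consequently the set of second components $u$ coming from elements of $S$ with first coordinate different from $c$ is a total dominating set of $\Gamma_{j+1}$, and so has at least $\gamma_t(\Gamma_{j+1})$ elements. As this set has at most $|S| - n_c$ elements, we obtain $|S| - n_c \geq \gamma_t(\Gamma_{j+1})$ for each $c$; summing over the $a_j$ choices of $c$ gives $(a_j-1)|S| \geq a_j\,\gamma_t(\Gamma_{j+1})$, and dividing and taking the ceiling yields the recursion. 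This double-counting step, and in particular the projection-to-a-total-dominating-set observation that produces the factor $a_j/(a_j-1)$, is where I expect the real work to lie; the rest is bookkeeping.

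It remains to verify the two arithmetic relations, both by downward induction. For the middle equality I would show that the inner block telescopes to $s-t+1$: setting $f_{s+1} = 1$ and $f_\ell = \lceil \frac{a_\ell}{a_\ell-1} f_{\ell+1} \rceil$, I claim $f_\ell = s-\ell+2$ for $t < \ell \leq s$. Writing $\frac{a_\ell}{a_\ell-1}(s-\ell+1) = (s-\ell+1) + \frac{s-\ell+1}{a_\ell-1}$, the defining property of $t$ gives $a_\ell - 1 \geq s - t \geq s - \ell + 1$ for $\ell > t$, so the fractional contribution lies in $(0,1]$ and the ceiling raises the value by exactly $1$; this yields $f_{t+1} = s-t+1$, which is the inner block. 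For the final inequality I would run the same induction over the outer factors, setting $g_{t+1} = s-t+1$ and $g_\ell = \lceil \frac{a_\ell}{a_\ell-1} g_{\ell+1} \rceil$ and proving $g_\ell \geq s - \ell + 2$ for $0 < \ell \leq t$; here I use only $a_\ell \geq 2$, so that $\frac{s-\ell+1}{a_\ell-1} > 0$ forces the ceiling to raise the value by at least $1$, giving $g_1 \geq s+1$. The feature worth highlighting is that the same telescoping uses the upper estimate $a_\ell \geq s-t+1$ on the inner factors (to pin the ceiling increment to exactly $1$) and only positivity on the outer factors (to guarantee an increment of at least $1$). Finally I would record the boundary cases $t=0$ and $t=s$, where one of the two blocks is empty and the corresponding identity is immediate.
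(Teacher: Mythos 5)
Your proof is correct, but it is organised differently from the paper's. The paper argues in one shot: given a total dominating set $T=\{t_1,\dots,t_k\}$, it applies the pigeonhole principle coordinate by coordinate --- at stage $l$, at least $k_{l-1}-\left\lfloor k_{l-1}\frac{a_l-1}{a_l}\right\rfloor$ of the surviving elements share their $l$-th coordinate --- and assembles these common values into a single vertex $g$ that would be undominated unless $k_s \geq 1$; unwinding $k_s \geq 1$ back through the floors yields exactly the nested ceiling. You instead isolate the modular recursion $\gamma_t(K_{a_j}\times \Gamma_{j+1}) \geq \left\lceil \frac{a_j}{a_j-1}\gamma_t(\Gamma_{j+1})\right\rceil$ and prove it by projection plus double counting: for each first coordinate $c$, the second components of the members of $S$ with first coordinate different from $c$ form a total dominating set of $\Gamma_{j+1}$ (since any dominator of $(c,w)$ avoids $c$ in its first coordinate), whence $|S|-n_c \geq \gamma_t(\Gamma_{j+1})$, and summing over the $a_j$ choices of $c$ gives $(a_j-1)|S| \geq a_j\,\gamma_t(\Gamma_{j+1})$. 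The two mechanisms are cousins (averaging versus pigeonhole) and produce the same bound, but yours buys a clean standalone inequality valid for $K_a\times\Delta$ with $\Delta$ an \emph{arbitrary} graph without isolated vertices, at the cost of an induction on the number of factors, while the paper's one-shot construction explicitly exhibits the undominated vertex witnessing the bound. Your treatment of the arithmetic tail --- the ceiling increments by exactly $1$ on the inner factors because $a_\ell - 1 \geq s-t \geq s-\ell+1$ for $\ell > t$, and by at least $1$ on the outer factors from positivity alone --- is a spelled-out inductive version of the paper's appeal to the fact that $\left\lceil m\cdot\frac{n+1}{n}\right\rceil \geq m+1$ with equality if and only if $n \geq m$, so the arithmetic content coincides; all steps, including the base case $\gamma_t(K_{a_s})=2=\left\lceil \frac{a_s}{a_s-1}\right\rceil$ and the monotonicity of $x\mapsto \left\lceil \frac{a_j}{a_j-1}x\right\rceil$ used to chain the recursion, check out.
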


\begin{proof}
Let $T = \{ t_1, \dots, t_k \}$ be a total dominating set for $\Gamma$. Let $k_0 = k$ and for each $1 \leq l \leq s$, let $k_l = \left\lfloor k_{l-1} \frac{a_l-1}{a_l} \right\rfloor$. For each $1 \leq l \leq s$, relabelling the elements of $T$ if necessary, we may assume that $t_{il} = t_{jl}$ for all $k_l < i,j \leq k_{l-1}$. Let $g = (t_{1(k_1+1)},t_{2(k_2+1)},\dots,t_{s(k_s+1)})$. Then $g$ is not adjacent to $t_i$ for all $i > k_s$. Therefore, $k_s \geq 1$. Now
\[
k_s \geq 1 \Leftrightarrow 1 \leq \left\lfloor k_{s-1} \frac{a_s-1}{a_s} \right\rfloor \Leftrightarrow 1 \leq k_{s-1} \frac{a_s-1}{a_s} \Leftrightarrow k_{s-1} \geq \frac{a_s}{a_s-1} \Leftrightarrow k_{s-1} \geq \left\lceil \frac{a_s}{a_s-1} \right\rceil.
\]
Continuing in this manner, we obtain
\[
k = k_0 \geq \left\lceil \frac{a_1}{a_1-1} \left\lceil \frac{a_2}{a_2-1} \cdots \left\lceil \frac{a_s}{a_s-1} \right\rceil \right\rceil \right\rceil,
\]
which proves that
\[
\gamma_t(\Gamma) \geq \left\lceil \frac{a_1}{a_1-1} \left\lceil \frac{a_2}{a_2-1} \cdots \left\lceil \frac{a_s}{a_s-1} \right\rceil \right\rceil \right\rceil.
\]

Note that for any positive integers $m$ and $n$ we have $\left\lceil m \cdot \frac{n+1}{n} \right\rceil \geq m+1$ with equality if and only if $n \geq m$. Therefore,
\[
\gamma_t(\Gamma) \geq \left\lceil \frac{a_1}{a_1-1} \left\lceil \frac{a_2}{a_2-1} \cdots \left\lceil \frac{a_s}{a_s-1} \right\rceil \right\rceil \right\rceil = \left\lceil \frac{a_1}{a_1-1} \left\lceil \frac{a_2}{a_2-1} \cdots \left\lceil \frac{a_t}{a_t-1} (s-t+1) \right\rceil \right\rceil \right\rceil \geq s+1. 
\]
This completes the proof.
\end{proof}

We now apply these combinatorial results to study $\gamma_t(\Delta(G))$ when $G$ is a finite $2$-generated nilpotent group. We adopt the notation in \eqref{eq:notation}.

\begin{prop} \label{prop:tdn}
Let $G$ be a finite $2$-generated noncyclic nilpotent group and assume that $q_1 < \cdots < q_s$ are exactly the primes for which $G$ has a noncyclic Sylow subgroup. Then 
\[
\gamma_t(\Delta(G)) = \gamma_t(K_{q_1+1} \times \cdots \times K_{q_s+1}) \geq s+1.
\] 
In particular, if $q_1 \geq s$, then
\[
\gamma_t(\Delta(G)) = s+1.
\]
\end{prop}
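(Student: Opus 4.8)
The plan is to reduce the computation of $\gamma_t(\Delta(G))$ entirely to the combinatorial lemmas already established for direct products of complete graphs. The key structural fact is that total domination is insensitive to the Frattini subgroup, so by Lemma~\ref{lem:tdn_frattini} we have $\gamma_t(\Delta(G)) = \gamma_t(\Delta(G/\frat(G)))$, and hence we may replace $G$ by $G/\frat(G)$ and assume $\frat(G)=1$. Under this reduction, \eqref{eq:frat} tells us that $G/\frat(G) = C_{p_1} \times \cdots \times C_{p_r} \times C_{q_1}^2 \times \cdots \times C_{q_s}^2$, where the $p_i$ are the primes with cyclic Sylow subgroups and the $q_j$ are exactly the primes $q_1 < \cdots < q_s$ with noncyclic Sylow subgroups, as in the statement.

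The heart of the argument is to identify $\Delta(G/\frat(G))$ with the graph $K_{q_1+1} \times \cdots \times K_{q_s+1}$ up to a harmless blow-up. First I would observe that the cyclic factor $C_{p_1} \times \cdots \times C_{p_r} = C_m$ (with $m = p_1 \cdots p_r$ squarefree) contributes a \emph{dominating} vertex at the level of generation: a generator of $C_m$ sits in any generating pair regardless of the other coordinate, so the cyclic part does not raise the total domination number. More precisely, using Corollary~\ref{cor:product} to split off the coprime factors, one has $\Delta(G/\frat(G)) = \Delta(C_m) \times \Delta(C_{q_1}^2) \times \cdots \times \Delta(C_{q_s}^2)$, and each factor $\Delta(C_{q_j}^2)$ is the complete multipartite graph $K_{q_j-1,\dots,q_j-1}$ on $q_j+1$ parts (as noted in the proof of Theorem~\ref{thm:conn}). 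The point is that for the purpose of total domination the multipartite blow-up $K_{t,\dots,t}$ behaves exactly like $K_{q_j+1}$: a set of vertices totally dominates the blow-up if and only if the set of parts it meets totally dominates $K_{q_j+1}$, since adjacency depends only on lying in distinct parts. This collapses each multipartite factor to a single complete graph $K_{q_j+1}$ and reduces $\Delta(C_m)$ similarly, yielding $\gamma_t(\Delta(G/\frat(G))) = \gamma_t(K_{q_1+1} \times \cdots \times K_{q_s+1})$.

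With the identification in hand, the lower bound $\gamma_t \geq s+1$ is immediate from Lemma~\ref{lem:tdn_graph} applied with the $s$ factors $K_{q_1+1},\dots,K_{q_s+1}$, each of size at least $2$. For the final ``in particular'' claim, I would invoke the equality case of Lemma~\ref{lem:tdn_graph}: when $q_1 \geq s$, the smallest factor $K_{q_1+1}$ has size $q_1+1 > s$, so $a_1 = q_1+1 > s$ and the lemma gives $\gamma_t(K_{q_1+1} \times \cdots \times K_{q_s+1}) = s+1$ exactly. Combining the three displayed equalities completes the proof.

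\textbf{Main obstacle.} I expect the delicate step to be the passage from the multipartite graphs $\Delta(C_{q_j}^2) = K_{q_j-1,\dots,q_j-1}$ to the complete graphs $K_{q_j+1}$ in the product, i.e.\ verifying that blowing up each part into $q_j-1$ twin vertices changes neither the existence nor the minimal size of a total dominating set for the \emph{direct product}. One must check that adjacency in the direct product of multipartite graphs depends only on the tuple of parts occupied, so that a total dominating set for the product descends to (and lifts from) one for $K_{q_1+1}\times\cdots\times K_{q_s+1}$; the cyclic factor $\Delta(C_m)$ requires analogous care since its generators play the role of an additional ``always-adjacent'' part. Once this reduction is made cleanly, the numerical bounds are exactly Lemma~\ref{lem:tdn_graph}.
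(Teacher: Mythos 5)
Your reduction of the $q$-part is sound and matches the paper's intent: modulo $\frat(G)$ (Lemma~\ref{lem:tdn_frattini}), the factors $\Delta(C_{q_j}^2) = K_{q_j-1,\dots,q_j-1}$ collapse, for total domination purposes, to the complete graphs $K_{q_j+1}$ on the $q_j+1$ nontrivial cyclic subgroups of $C_{q_j}^2$, since adjacency in a direct product of complete multipartite graphs depends only on the tuple of parts occupied; and the final numerics via Lemma~\ref{lem:tdn_graph} are exactly right. The genuine gap is your treatment of the cyclic factor $C_m = C_{p_1} \times \cdots \times C_{p_r}$. Corollary~\ref{cor:product} requires \emph{both} factors to be noncyclic, so the claimed decomposition $\Delta(G/\frat(G)) = \Delta(C_m) \times \Delta(C_{q_1}^2) \times \cdots \times \Delta(C_{q_s}^2)$ is false whenever $r \geq 1$: if $c$ generates $C_m$, then $(c,y)$ and $(c,y')$ are adjacent in $\Delta(G/\frat(G))$ whenever $\<y,y'\>$ is everything, because $\<c,c\> = C_m$; but they are \emph{not} adjacent in the graph product, since $\Gamma(C_m)$ has no loops and hence $c \not\sim c$. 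So the true graph strictly contains the product graph (Lemma~\ref{lem:product}(ii) gives only one inclusion), and the discrepancy is not harmless for total domination. Concretely, take $G = C_2 \times C_3^2$: in the true graph $\Delta(G)$ the set $\{(x_0,y_1),(x_0,y_2)\}$ with $x_0 \neq 1$ and $\<y_1\> \neq \<y_2\>$ is a total dominating set, so $\gamma_t(\Delta(G)) = 2 = \gamma_t(K_4)$ as the proposition asserts; but in $\Delta(C_2) \times \Delta(C_3^2) = K_2 \times K_{2,2,2,2}$ (a bipartite double cover) every vertex with first coordinate $x_0$ dominates only vertices with first coordinate $1$ and vice versa, forcing $\gamma_t = 4$. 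Your heuristic that ``a generator sits in any generating pair regardless of the other coordinate'' is correct in $\Delta(G)$ but fails in the product graph, which is exactly where your argument breaks.

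The repair is what the paper does: avoid graph products entirely for the cyclic part and work directly with the generation criterion \eqref{eq:gen_frat}. A vertex $(h_1,\dots,h_r,y_1,\dots,y_s)$ of $\Delta(G/\frat(G))$ is dominated by $(g_1,\dots,g_r,x_1,\dots,x_s)$ if and only if $g_i \neq 1$ or $h_i \neq 1$ for each $i \leq r$ and $\<x_j\> \neq \<y_j\>$ for each $j \leq s$. This condition is monotone in the cyclic coordinates, so one may replace every element of a putative total dominating set by one whose $p$-coordinates are all nontrivial (this only enlarges neighbourhoods); the vertices with all $h_i = 1$ show such elements are needed anyway. After this normalisation the $p$-coordinates impose no constraint, and the bijective correspondence with total dominating sets of $K_{q_1+1} \times \cdots \times K_{q_s+1}$ goes through in both directions, giving the stated equality; your version only establishes it when $r = 0$.
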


\begin{proof}
Observe that 
\[
\<(g_1,\dots,g_r,x_1,\dots,x_s), (h_1,\dots,h_r,y_1,\dots,y_s) \> = G/\frat(G)
\]
if and only if $g_i \neq 1$ or $h_i \neq 1$  for all $1 \leq i \leq r$ and $1 \neq \< x_i \> \neq \< y_i \> \neq 1$ for all $1 \leq i \leq s$. Therefore, 
\[
(h_1,\dots,h_r,y_1,\dots,y_s) \in \Delta(G/\frat(G))
\]
if and only if for all $1 \leq i \leq r$ we have $y_i \neq 1$.

Let $T = \{ t_1,\dots,t_d \} \subseteq \Delta(G/\frat(G))$ where $t_k = (g_{k1},\dots,g_{kr},x_{k1},\dots,x_{ks})$. Then $T$ is a total dominating set for $\Delta(G/\frat(G))$ if and only if for all $(h_1,\dots,h_r,y_1,\dots,y_s) \in \Delta(G/\frat(G))$, there exists $t_k \in T$ such that for all $1 \leq i \leq r$ we have $g_{ki} \neq 1$ and for all $1 \leq i \leq s$ we have $\<x_{ki}\> \neq \<y_i\>$. Therefore, we can fix generators $g_1,\dots,g_r$ for the subgroups $C_{p_1}, \dots, C_{p_r}$ and, without loss of generality, assume that $g_{ki} = g_k$ for all $1 \leq i \leq r$ and $1 \leq k \leq d$. Now, by identifying $[q_i+1]$ with the set of nontrivial cyclic subgroups of $C_{q_i}^2$ for each $1 \leq i \leq s$, we see that the total dominating sets for $\Delta(G/\frat(G))$ correspond exactly to the total dominating sets for $K_{q_1+1} \times \cdots \times K_{q_s+1}$. Therefore, $\gamma_t(\Delta(G)) = \gamma_t(\Delta(G/\frat(G))) = \gamma_t(K_{q_1+1} \times \cdots \times K_{q_s+1})$. In particular, if $q_1 \geq s$, then Corollary~\ref{cor:tdn_graph} implies that $\gamma_t(\Delta(G)) = s+1$.
\end{proof}

Notice that Theorem~\ref{thm:tdn} is an immediate consequence of Lemma~\ref{lem:tdn_cyclic} and Proposition~\ref{prop:tdn}.

\section{Clique and chromatic numbers} \label{s:clique_chromatic}

Let $\Gamma$ be a finite graph. The \emph{clique number} of $\Gamma$, written $\omega(\Gamma)$, is the greatest $k$ for which $K_k$ is a subgraph of $\Gamma$, and the \emph{chromatic number} of $\Gamma$, written $\chi(\Gamma)$, is the least $k$ such that $\Gamma$ admits a proper $k$-colouring. It is clear that $\chi(\Gamma) \geq \omega(\Gamma)$. In this final section, we prove Theorem~\ref{thm:clique_chromatic}.

\begin{proof}[Proof of Theorem~\ref{thm:clique_chromatic}]
If $G$ is not cyclic, then the statement follows from \cite[Theorem 1.1]{luma}. Therefore, assume that $G=\< g \>$ is a cyclic group of order $n=p_1^{a_1}\cdots p_r^{a_r}$, where $r=\pi(n)$. Let $u=\phi(n)$ and let $x_1,\dots,x_u$ be the elements of $G$ with order $n$. Moreover, for $1\leq i\leq r$, let $y_i=g^{p_i}$. It can be easily seen that $\{x_1,\dots,x_u,y_1,\dots,y_r\}$ induces a complete subgraph of $\Gamma(G)$, so $\omega(\Gamma(G))\geq u+r$. Write $B_0 = \{ x_1,\dots,x_u \}$ and for each $1 \leq i \leq r$ write $B_i=\< g^{p_i}\>$ for each $1 \leq i \leq r$. For $1 \leq i \leq r$ write $C_{i} = G \setminus \cup_{0 \leq j \leq i} B_j$, and for $1 \leq i \leq u$, write $C_{r+i} = \{ x_i \}$. We claim that the colouring of $\Gamma(G)$ with colour classes $C_1, \dots, C_{u+r}$ is a proper colouring. To see this, note that if $g,h \in C_i$ are distinct, for some $1 \leq i \leq u+r$, then $i \leq r$ and $|g|$ and $|h|$ both divide $n/p_i$, so $\<g,h\> \neq G$. Therefore, $\chi(\Gamma(G))\leq u+r$, which allows us to conclude that $\omega(\Gamma(G))=\chi(\Gamma(G))=u+r$.
\end{proof}

\end{document}